\newcommand{\comment}[1]{}
\newtheorem{theorem}{Theorem}[section]
\newtheorem{corollary}{Corollary}[section]
\newtheorem{proposition}{Proposition}[section]
\newtheorem{acknowledgement}[theorem]{Acknowledgement}
\newtheorem{notation}{Notation}[section]
\newcommand{\A}{\mathscr{A}_T}
\newtheorem{remark}{Remark}[section]
\newcommand{\PP}{\mathbb{P}}
\newcommand{\T}{\mathbb{T}}
\numberwithin{equation}{section}
\begin{document}
\title{Asymptotic error distribution for the Euler scheme with locally Lipschitz coefficients}
\author{Philip Protter
\thanks{Statistics Department, Columbia University, New York, NY 10027;  pep2117@columbia.edu; supported in part by
NSF grant DMS-1612758}
\hspace{3mm}Lisha Qiu%
\thanks{Statistics Department, Columbia University, New York, NY 10027;  lq2141@columbia.edu}
\hspace{3mm}and Jaime San Martin%
\thanks{CMM-DIM, UMI-CNRS 2807, Universidad de Chile; jsanmart@dim.uchile.cl; supported in part by BASAL project PBF-03}\\
{\bf Keywords:} Asymptotic normalized Error, Euler Scheme,\\
 Stochastic Differential Equations, Locally Lipschitz Coefficients\\
{\bf MSC 2000 Subject Classifications: Primary} 60H10; 60H35;\\ {\bf Secondary} 60J60, 60J65
 }
\date{\today }

\maketitle
\begin{abstract}
In traditional work on numerical schemes for solving stochastic differential equations (SDEs), it is usually assumed that 
the coefficients are globally Lipschitz. This assumption has been used to establish a powerful analysis of the numerical approximations of the solutions of stochastic differential equations. 
In practice, however, the globally Lipschitz assumption on the coefficients is on occasion too stringent a requirement to meet. Some Brownian motion driven SDEs used in applications have coefficients that are Lipschitz only  on compact sets. Reflecting the importance of the locally Lipschitz case, it has been well studied in recent years, yet some simple to state, fundamental results remain unproved. We attempt to fill these gaps in this paper, establishing both a rate of convergence, but also we find the asymptotic normalized error process of the error process arising from a sequence of approximations. The result is analogous to the original result of this type, established in~\cite{KT1991-2} back in 1991. This result was improved in 1998 in~\cite{JJ1998}, and recently(2009) it was partially extended in~\cite{AN}. As we indicate, the results in our paper provide the basis of a statistical analysis of the error; in this spirit we give conditions for a finite variance. 
\end{abstract} 
\begin{keywords}
stochastic differential equation, locally Lipschitz, Euler scheme, asymptotic normalized error process, weak convergence
\end{keywords}
\section{Introduction}
We investigage the numerical solution of a one-dimensional stochastic differential equation (SDE) of the form
\begin{align} \label{Sde1}
dX_t=\mu(X_t)dt+\sigma(X_t)dW_t, \  0\leq t \leq T,\  X_0=x_0 \in \mathbb R.
\end{align}
Here $X_t \in \mathbb R$ for each $t$, $\mu,\sigma: \mathbb R \to \mathbb R$ are coefficient functions, and $W$ is a one dimensional standard Brownian motion.  We assume the initial value $x_0\in \mathbb R$ is non-random. For background information about SDEs, we refer to 
Chapter 5 of Protter \cite{PP1990}, Chapter 9 of Revuz and Yor \cite{RD2013} and Chapter 5 of Karatzas and Shreve \cite{KI2012}. 

In applications, one would often like to solve (\ref{Sde1}) numerically, as an explicit solution is usually not obtainable. This is often done in low dimensions using PDE methods that require heavy computational complexity. Hence, in practice, it is advisable to solve (\ref{Sde1}) with the simple Euler scheme. (See the survey paper of Talay \cite{TD1984} for a discussion of this issue). Our primary objective is to study uniform convergence in probability and weak convergence of the normalized error process for the Euler scheme under locally Lipschitz and no finite explosion assumptions on (\ref{Sde1}). Note that the result on uniform convergence in probability in this paper is not restricted to the Euler scheme, but applicable to all numerical schemes satisfying some mild assumptions.

The use of the Euler scheme to solve Brownian motion driven SDEs is already well studied. A number of treatments impose conditions on $\mu$ and $\sigma$ in (\ref{Sde1}), and in particular a globally Lipschitz condition and/or a linear growth condition is imposed. We list some of the works here. For the rate of convergence of the expectation of functionals, see Talay and Tubaro \cite{TD1990};
for the rate of convergence of the distribution function, see Bally and Talay  \cite{BV1996-1}; for the rate of 
convergence of the density, see Bally and Talay \cite{BV1996-2}; for error analysis, see
Bally and Talay \cite{BV1995}; for an Euler scheme when one has irregular coefficients and Hölder continuous coefficients see Yan \cite{YL2002}, and in this regard see also Bass-Pardoux~\cite{Bass}; for complete reviews, see Talay \cite{TD1990-2} and Kloeden-Platen \cite{KP1999}. In two interesting recent papers M. Bossy et al~\cite{Bossy}, \cite{Bossy2008} have studied a modified (symmetrized) Euler scheme to handle solutions of the Cox-Ingersoll-Ross type (CIR), but for equations where the diffusive coefficient is of the form $\vert x\vert^\alpha$ for $\frac{1}{2}\leq \alpha<1$, which are of course locally Lipschitz.

There is also some work on numerical schemes for solving SDEs not tied to Brownian motion, but rather driven by semimartingales with jumps. The case of SDEs driven by Brownian motion and Lebesgue measure  can be found in
Kurtz and Protter~\cite{KT1991-2} where the convergence in distribution of the normalized Euler scheme
is first studied. $L^p$ estimates of the Euler scheme error were given by Kohatsu-Higa and
Protter \cite{KH1994}.  Protter and Talay \cite{PP1997} also studied the Euler scheme for SDEs driven
by Lévy processes. Jacod and Protter~\cite{JJ1998} obtained a (to date) definitive result about the
asymptotic error distributions for the Euler scheme solving SDEs driven by a vector of
semimartingales. 
{
 More recent work has focused on numerical schemes to solve SDEs under relaxed conditions on the coefficients, to wit the locally Lipschitz condition replaces the customary Lipschitz condition. Under the locally Lipschitz hypothesis, the Euler scheme may diverge in the strong sense of convergence, such as $L^p$.  The $L^p$ convergence, or more correctly the lack of it, is studied in Hutzenthaler, Jentzen and Kloeden~\cite{MH2011}.}  To obtain convergence results for the Euler scheme under the locally Lipschitz condition, additional assumptions are assumed in existing work. 
Examples of attempts are assuming the existence of a Lyapunov function, or a one sided Lipschitz condition and finite moments of the true solution and the numerical solution (see~\cite{GI},\cite{HD2002},\cite{MG2002} ). 

Convergence in probability for Euler-type schemes in general still holds, see Hutzenthaler and Jentzen~\cite{MH2015} and the citations therein.
Under the condition that $\mu,\sigma$ are continuously differentiable ($\mathcal{C}^1$) and grow at most linearly, Kurtz and Protter \cite{KT1991-2} obtained the limit distribution for the asymptotic normalized error process for the Euler scheme.   
Neuenkirch and Zähle~\cite{AN} generalized the result of Kurtz and Protter by assuming the solution never leaves an open set in finite time and that the coefficients are $\mathcal{C}^1$.

In this paper, we study the limit distribution for the asymptotic normalized error process with only a locally Lipschitz assumption plus no finite time explosions, and $\sigma$ in (\ref{Sde1}) being bounded away from $0$. 
By relaxing the $\mathcal{C}^1$ and linear growth hypotheses to the assumption that the coefficient need only be locally Lipschitz, we are able to deal with coefficients that may have super linear growth, and their derivatives may have poor smoothness properties, or may not even exist.

Some locally Lipschitz coefficients lead to well defined stochastic differential equations, but only because the solution remains always positive. This is the case for example with the CIR type processes. The Euler scheme approximations, however, need not be defined, since for example we might be taking the square root of a negative quantity at some steps. For these situations, we can use a nice trick due to Bossy at al~\cite{Bossy2008,Bossy} where the Euler scheme is replaced by what is known as a symmetrized Euler scheme. This keeps the approximations positive, too. Our results apply for these schemes as well, since they are ''local'', which is our rubric for the types of schemes we utilize in this paper.

This paper is organized as follows. In Section \ref{Section2}, we prove that if a numerical scheme converges uniformly in probability on any compact time interval with a certain rate under the globally Lipschitz condition, then the same result holds when the globally Lipschitz condition is replaced with a locally Lipschitz condition and a no finite time explosion condition. 
The Euler and Milstein schemes are studied as examples. From Section 3 on, we focus on the Euler scheme. We prove that the sequence of the error process for the Euler scheme normalized by $\sqrt{n}$ is relatively compact. Furthermore, by proving uniqueness of the limit process, we show the normalized error process converges in law. The limit error process is also provided as a solution to an SDE. This is not surprising, given the results of~\cite{KT1991-2}. Section 4 turns to a study on the the second moment of the weak limit process and its running maximum. In the last section, we give an upper bound for the rate of weak convergence for the approximating expectation of functionals for the Euler scheme. 

\bigskip

\begin{acknowledgement}
We wish to thank Jean Jacod and Denis Talay for helpful discussions during our work on this paper. The results in this paper constitute part of the PhD thesis of Lisha Qiu,  in the Statistics Department of Columbia University. The third author is very grateful for the hospitality of the Statistical Department of Columbia University.
\end{acknowledgement}


\section{Convergence in Probability}\label{Section2}
We start with some notation. For a discretization of the time interval $[0,T]$ with discretization size $\frac{T}{n}$, let $n(t)=[\frac{nt}{T}]\frac{T}{n}$, the nearest left time grid point for $t$. For all
$g :[0,T]\rightarrow \mathbb R$, define 
\begin{align} \label{delta}
\Delta g_t^{(n)}=g(t)-g({n(t)}).
\end{align}  
The continuous Euler scheme for solving SDE (\ref{Sde1}) is defined by
\begin{align} 
X_t^{E,n}&=X_{n(t)}^{E,n}+\sigma(X_{n(t)}^{E,n})\Delta W_t^{(n)}+\mu(X_{n(t)}^{E,n})\Delta t^{(n)}, \  X^{E,n}_0=X_0, 
\end{align}
and the continuous Milstein scheme is defined by
\begin{align*} 
X_t^{M,n}&=X_{n(t)}^{M,n}+\sigma(X_{n(t)}^n)\Delta W_t^{(n)}+\mu(X_{n(t)}^n)\Delta t^{(n)}+
\frac{1}{2}\sigma(X_{n(t)}^{M,n})\sigma^\prime(X_{n(t)}^{M,n})[(\Delta W_t^{(n)})^2-\Delta t^{(n)}]\\&+
\frac{1}{2} \mu(X_{n(t)}^{M,n})\mu^\prime(X_{n(t)}^{M,n}) (\Delta t^{(n)})^2, \quad X^{M,n}_0=X_0.
\end{align*}

Under the globally Lipschitz condition, most of the proposed numerical schemes including the Euler and Milstein schemes have been proved to converge uniformly in probability at a finite time point. Fortunately, the same result can be extended to the locally Lipschitz case if one also adds a no finite time explosion condition. 
To prove this, we need a localization technique. Let us start with some notation. 

\bigskip

\begin{notation}
Given a process $Z$, we denote by $\T^m(Z)=\inf\{ t\ge 0: |Z_t|>m\}$. 
Also, we denote by $Z^\T$ the stopped process.
\end{notation}
In what follows, we denote by $X=X(x_0,\mu,\sigma,W)$ the unique
solution of the SDE (\ref{Sde1}),
where the coefficients $\mu,\ \sigma$ are assumed regular enough to have a unique strong solution (for example locally Lipschitz).
For every $m \ge 1$  consider $\mu^{(m)}$ a continuous modification of $\mu$ such that $\mu(x)=\mu^{(m)}(x)$ for $|x| \le m$,
$\mu^{(m)}(x)=\mu(m+1)$ for $x \ge m+1$ and $\mu^{(m)}(x)=\mu(-m-1)$ for $x \le -m-1$.
In case the numerical procedure assumes that $\mu$ is ${\cal C}^k$ (or Lipschitz) we interpolate $\mu^{(m)}$ 
on $(-m-1,-m)\cup (m,m+1)$ in such a way that $\mu^{(m)}$ is also ${\cal C}^k$ (respectively Lipschitz). 
Similarly, we denote by $\sigma^{(m)}$ a modification of $\sigma$.
Given a numerical procedure $\phi$, we denote by $(X^{\phi,n})_n=(X^{\phi,n}(x_0,\mu,\sigma,W))_n$ the associated sequence of approximations. We remove the dependence on $\phi$ in $X^{\phi,n}$ when there is no possible confusion.
Note that we use the same Brownian motion for every $n$.
This numerical procedure is assumed {\bf local} in the following sense. Assume that $\mu=\tilde \mu, \sigma=\tilde \sigma$
on the interval $[-m,m]$, where $|x_0|<m$. 
Then for all $n$  and for $\T=\T^m(X^{\phi,n}(x_0,\mu,\sigma,W))$ it holds 
$$
(X^{\phi,n}(x_0,\mu,\sigma,W))^\T=(X^{\phi,n}(x_0,\tilde \mu,\tilde \sigma,W))^\T \,
$$
almost surely. In particular $\T^m(X^{\phi,n}(x_0,\mu,\sigma,W))=\T^m(X^{\phi,n}(x_0,\tilde \mu,\tilde \sigma,W))$ a.s..
This hypothesis is satisfied, for example, by the Euler and Milstein schemes. 
On the other hand, if $(\mu,\sigma)$ and $(\tilde \mu,\tilde \sigma)$ are regular, the associated solutions
satisfy
$$
(X(x_0,\mu,\sigma,W))^{\T}=(X(x_0,\tilde \mu,\tilde \sigma,W))^{\T}\
$$
almost surely for $\T=\T^m(X(x_0,\mu,\sigma,W))$.
Again, we have $\T^m(X(x_0,\mu,\sigma,W))=\T^m(X(x_0,\tilde \mu,\tilde \sigma,W))$ a.s..
\bigskip
Now, we present Theorem \ref{th.CVP}.

\begin{theorem}\label{th.CVP}
Assume that a numerical scheme $\phi$ is well defined and local. Let $X^{n}_t$ be the numerical solution using $\phi$ for the SDE (\ref{Sde1}) on 
$[0,T]$. If $X^{n}_t$ converges in probability uniformly on $[0,T]$, with order $\alpha>0$, that is $\forall \ C>0$
\begin{align} \label{cvp-eq1}
\mathbb P(n^\alpha  \underset{0\leq t \leq T} {\sup} \abs{X^{n}_t-X_t}>C )\rightarrow 0, \quad as \ n \rightarrow +\infty, \ 
\end{align}
given $\mu, \sigma$ are globally Lipschitz, then (\ref{cvp-eq1}) also holds when the globally Lipschitz condition is replaced with a locally Lipschitz condition and a no finite time explosion condition. 
\end{theorem}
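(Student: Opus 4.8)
The plan is to localize and thereby reduce to the already-known globally Lipschitz case \eqref{cvp-eq1}, using exactly the ``local'' property of $\phi$ recorded above. Fix $C>0$ and $\varepsilon>0$. Since $X$ has no finite time explosion, $\sup_{0\le t\le T}|X_t|<\infty$ a.s., so $\PP\big(\sup_{0\le t\le T}|X_t|\ge m\big)\to 0$ as $m\to\infty$; choose and fix an integer $m$ with $|x_0|<m-1$ and $\PP\big(\sup_{0\le t\le T}|X_t|\ge m-1\big)<\varepsilon$. Let $\mu^{(m)},\sigma^{(m)}$ be the modifications constructed before the theorem: they agree with $\mu,\sigma$ on $[-m,m]$, are bounded and constant outside a compact set, hence globally Lipschitz, and by the interpolation they retain whatever smoothness the scheme $\phi$ needs. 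Write $\tilde X=X(x_0,\mu^{(m)},\sigma^{(m)},W)$ and $\tilde X^{n}=X^{\phi,n}(x_0,\mu^{(m)},\sigma^{(m)},W)$. Because the modified coefficients are globally Lipschitz, $\tilde X$ is a well-defined global solution and \eqref{cvp-eq1}, applied to the pair $(\tilde X^{n},\tilde X)$, gives $n^{\alpha}\sup_{0\le t\le T}|\tilde X^{n}_t-\tilde X_t|\to0$ in probability; in particular $\PP(D_n^c)\to0$ where $D_n:=\{\sup_{0\le t\le T}|\tilde X^{n}_t-\tilde X_t|<\tfrac12\}$.

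Next I would match the processes on a good event. On $B_m:=\{\sup_{0\le t\le T}|X_t|<m-1\}$ one has $\T^m(X)>T$, so the coincidence property for solutions applied with the stopping time $\T^m(X)$ (and using $\mu=\mu^{(m)}$, $\sigma=\sigma^{(m)}$ on $[-m,m]$) yields $X_t=\tilde X_t$ for all $t\in[0,T]$. On $B_m\cap D_n$ it then follows that $\sup_{0\le t\le T}|\tilde X^{n}_t|\le\sup_{0\le t\le T}|\tilde X_t|+\tfrac12=\sup_{0\le t\le T}|X_t|+\tfrac12<m-\tfrac12<m$, hence $\T^m(\tilde X^{n})>T$. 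Since $\T^m(X^{n})=\T^m(\tilde X^{n})$ a.s. and $\phi$ is local, the stopped numerical processes agree, so $X^{n}_t=\tilde X^{n}_t$ for all $t\in[0,T]$. Consequently, on $B_m\cap D_n$,
$$\sup_{0\le t\le T}|X^{n}_t-X_t|=\sup_{0\le t\le T}|\tilde X^{n}_t-\tilde X_t|.$$

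Assembling the pieces,
$$\PP\Big(n^{\alpha}\sup_{0\le t\le T}|X^{n}_t-X_t|>C\Big)\le\PP(B_m^c)+\PP(D_n^c)+\PP\Big(n^{\alpha}\sup_{0\le t\le T}|\tilde X^{n}_t-\tilde X_t|>C\Big),$$
where the first term is $<\varepsilon$ by the choice of $m$ and the last two tend to $0$ as $n\to\infty$ by \eqref{cvp-eq1} for $(\mu^{(m)},\sigma^{(m)})$. Thus $\limsup_n\PP\big(n^{\alpha}\sup_{0\le t\le T}|X^{n}_t-X_t|>C\big)\le\varepsilon$, and letting $\varepsilon\downarrow0$ finishes the argument. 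No step involves a hard estimate; the one point requiring care is the bookkeeping of the exit times, namely building in the margins ($m-1$ versus $m$ for the true solution, and the $\tfrac12$-slack in $D_n$) so that on the good event the \emph{numerical} solution is also guaranteed to remain in $[-m,m]$ --- this is precisely what licenses invoking the locality hypothesis to replace $X^{n}$ by $\tilde X^{n}$. The remaining verifications, that $\mu^{(m)},\sigma^{(m)}$ may be taken globally Lipschitz (immediate, as a locally Lipschitz function that is constant off a compact set is globally Lipschitz) and that $X^{n}$ is well defined for the original coefficients, are routine.
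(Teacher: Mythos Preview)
Your proof is correct and follows essentially the same localization argument as the paper: truncate the coefficients at level $m$, use the margin $m-1$ versus $m$ so that on the good event the numerical approximation to the truncated SDE also stays inside $[-m,m]$, then invoke locality to identify $X^{n}$ with $\tilde X^{n}$ and apply the globally Lipschitz hypothesis. The only cosmetic difference is that the paper absorbs your auxiliary event $D_n$ into the main event by taking $n$ large enough that $C/n^{\alpha}<1$, yielding two terms in the final bound instead of three.
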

\begin{proof}


In what follows, to avoid overly burdensome notation, we denote by 
$$
\begin{array}{l}
X=X(x_0,\mu,\sigma,W),\, X^n=X^{n}(x_0,\mu,\sigma,W),\\
\\
Y^{(m)}=X(x_0,\mu^{(m)},\sigma^{(m)},W),\, Y^{n,(m)}=X^{n}(x_0,\mu^{(m)},\sigma^{(m)},W)\,.
\end{array}
$$ 

Now, define 
$$
\mathscr{X}_n=\left\{\omega:\, n^\alpha \sup\limits_{0\le t\le T} |X^n_t-X_t|\le C\right\},\,
\mathscr{Y}_{n,(m)}=\left\{\omega:\, n^\alpha \sup\limits_{0\le t\le T} |Y^{n,(m)}_t-Y^{(m)}_t|\le C\right\}\,.
$$
We also consider $\T=\T^{m}(X), {\cal S}=\T^{m-1}(X)$. It is clear that $X^{\cal S}=(Y^{(m)})^{\cal S}$
(actually they are equal up to time $\T$). Since the numerical procedure is local, also 
${\cal U}=\T^{m}(X^n)=\T^{m}(Y^{n,(m)})$ and
$$
(X^n)^{\cal U}=(Y^{n,(m)})^{\cal U}\,.
$$

Consider $m$ large enough such that $|x_0|<m-1$ and $n$ large enough such that $C/n^\alpha<1$. 
For these values of $n,m$, we show that a.s.
$$
\mathscr{Y}_{n,(m)} \bigcap \{{\cal S}>T\}\subset \mathscr{X}_{n}\,.
$$
Indeed, on the set $\{{\cal S}>T\}$ the two processes $X, Y^{(m)}$ agree on $[0,T]$ a.s.. In particular, 
we have that $\sup\limits_{0\le t\le T} |Y^{(m)}_t|=\sup\limits_{0\le t\le T} |X_t|\le m-1$ a.s. 
On the other hand on the set $\mathscr{Y}_{n,(m)} \bigcap \{{\cal S}>T\}$
we have a.s.
$$
\sup\limits_{0\le t\le T} |Y^{n,(m)}_t|\le m-1+\frac{C}{n^\alpha}< m\,.
$$
That is $\T^{m}(Y^{n,(m)})>T$ a.s.. Since $\phi$ is local, we deduce that on $[0,T]$ the processes
$X^n$ and $Y^{n,(m)}$ agree a.s. and therefore on $\mathscr{Y}_{n,(m)} \bigcap \{{\cal S}>T\}$
$$
\sup\limits_{0\le t\le T} |X^n-X_t|=\sup\limits_{0\le t\le T} |Y^{n,(m)}-Y^{(m)}_t|\le \frac{C}{n^\alpha}
$$
holds also a.s., proving the desired inclusion. This shows that the inequality
$$
\PP\left(n^\alpha \sup\limits_{0\le t\le T} |X^n_t-X_t|> C\right)\le
\PP\left(n^\alpha \sup\limits_{0\le t\le T} |Y^{n,(m)}_t-Y^{(m)}_t|> C\right) + \PP(\T^{m-1}(X)\le T)\,.
$$
Now, given $\epsilon>0$ choose $m$ large such that $\PP(\T^{m-1}(X)\le T)\le \epsilon/2$. For that $m$, 
according to the hypothesis of the Theorem, there exists $n_0=n_0(m,\epsilon)$, such that for all $n\ge n_0$
$$
\PP\left(n^\alpha \sup \limits_{0\le t\le T} |Y^{n,(m)}_t-Y^{(m)}_t|> C\right)\le \frac{\epsilon}{2}\,,
$$
giving the result.
\end{proof}


Taking $\alpha=0$ in Theorem \ref{th.CVP}, we immediately see that if a numerical scheme converges in
probability uniformly on compact time intervals for solving SDEs with the globally Lipschitz coefficients, then the same result also holds under the locally Lipschitz condition and no finite time explosion condition.   

We illustrate the application of Theorem \ref{th.CVP} using the two most widely used numerical schemes, the Euler scheme and the Milstein scheme.  Under the globally Lipschitz condition, it is well known that the continuous Euler and Milstein schemes converge in probability uniformly on compact time intervals with any order between $[0,\frac{1}{2})$ and $[0,1)$ respectively. Interested readers can refer to  \cite{TD1983} and \cite{YL2002} for details. Then applying Theorem \ref{th.CVP} leads to the following corollary.
\begin{corollary}\label{cor1}
 Consider SDE (\ref{Sde1}), assume $\mu, \sigma$ are locally Lipschitz and the solution has no finite time explosion and the continuous Euler scheme $X^{E,n}$
and the continuous Milstein scheme $X^{M,n}$ are well defined for solving (\ref{Sde1}).  Then $X^{E,n}$ 
and $X^{M,n}$, converge in probability uniformly to $X$ on $[0,T]$. 
Moreover $\forall \ \gamma \in (0,\frac{1}{2}]$, we have
\begin{align*} 
&\mathbb P(n^{\frac{1}{2}-\gamma}\underset{0\leq t \leq T} {\sup} \abs{X^{E,n}-X_t}>C )\rightarrow 0, \quad as\ n \rightarrow +\infty, \\
&\mathbb P(n^{1-\gamma} \underset{0\leq t \leq T} {\sup} \abs{X^{M,n}-X_t}>C )\rightarrow 0, \quad as\ n \rightarrow +\infty.
\end{align*}
\end{corollary}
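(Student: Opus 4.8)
The plan is to obtain the corollary directly from Theorem \ref{th.CVP}, so the work consists only in checking that its hypotheses hold for the continuous Euler and Milstein schemes. Both $X^{E,n}$ and $X^{M,n}$ are well defined for (\ref{Sde1}) by assumption, and both are \emph{local} in the sense of the Notation preceding Theorem \ref{th.CVP}: each one-step update is built only from the value of the approximation at the current grid point, the increment $\Delta W^{(n)}$ and the increment $\Delta t^{(n)}$ (together with $\sigma'$ and $\mu'$ for the Milstein scheme), so if $(\mu,\sigma)$ and $(\tilde\mu,\tilde\sigma)$ agree on $[-m,m]$ with $|x_0|<m$, the two approximations coincide up to the exit time $\T^m$ of the approximation, as was already observed in the text; the same holds for the true solutions of the two SDEs.

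Next I would recall the classical convergence rates in the globally Lipschitz case, which is really a statement about the schemes applied to \emph{arbitrary} globally Lipschitz coefficients (this is how Theorem \ref{th.CVP} uses its hypothesis, namely on the modified coefficients $\mu^{(m)},\sigma^{(m)}$, which are globally Lipschitz once $\mu,\sigma$ are locally Lipschitz). For globally Lipschitz coefficients, for every $\alpha\in[0,\frac12)$ the continuous Euler scheme satisfies $n^\alpha\sup_{0\le t\le T}|X^{E,n}_t-X_t|\to 0$ in probability, and for every $\alpha\in[0,1)$ the continuous Milstein scheme satisfies $n^\alpha\sup_{0\le t\le T}|X^{M,n}_t-X_t|\to 0$ in probability; see \cite{TD1983} and \cite{YL2002}. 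Writing $\alpha=\frac12-\gamma$ for the Euler scheme and $\alpha=1-\gamma$ for the Milstein scheme with $\gamma\in(0,\frac12]$, these are exactly the bounds (\ref{cvp-eq1}), the endpoint $\gamma=\frac12$ corresponding to $\alpha=0$.

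Finally I would apply Theorem \ref{th.CVP} with $\phi$ equal to the Euler scheme and then to the Milstein scheme: for $\gamma\in(0,\frac12)$ in the form stated, and for $\gamma=\frac12$ in the $\alpha=0$ form recorded in the remark immediately following that theorem. Since these schemes are local and (\ref{cvp-eq1}) holds under the global Lipschitz condition, it persists once that condition is relaxed to locally Lipschitz with no finite-time explosion, which is precisely the hypothesis of the corollary; taking $\gamma=\frac12$ yields the asserted uniform convergence in probability of $X^{E,n}$ and $X^{M,n}$ to $X$, while $\gamma\in(0,\frac12)$ gives the two displayed rates. I do not anticipate a genuine obstacle here: the only points needing care are confirming the locality of the two schemes so that the localization argument of Theorem \ref{th.CVP} applies verbatim, and quoting the classical rate results for the \emph{continuous} (as opposed to discrete) versions of the schemes.
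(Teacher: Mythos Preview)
Your proposal is correct and follows essentially the same approach as the paper: the paper does not give a separate proof of the corollary but simply observes (in the paragraph preceding it) that the Euler and Milstein schemes are local, cites \cite{TD1983} and \cite{YL2002} for the classical rates $[0,\tfrac12)$ and $[0,1)$ under global Lipschitz, and then invokes Theorem~\ref{th.CVP}. Your write-up spells out the same three steps with slightly more care about the parametrization $\alpha=\tfrac12-\gamma$ and $\alpha=1-\gamma$, which is fine.
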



\section{Asymptotic Error Distribution for the Euler Scheme}

In this section, we are going to prove that the asymptotic normalized error process from the Euler scheme converges in distribution with rate $\sqrt{n}$, under the locally Lipschitz and no finite time explosion assumption.
In the previous section, the localization technique used in the proof for Theorem \ref{th.CVP} transfers the locally Lipschitz case into the globally Lipschitz case. The localization technique will be used in this section as well, and we present Proposition \ref{lemma3-1} to make future proofs concise when applying this technique.\\

\begin{proposition}\label{lemma3-1} 
Consider the SDE (\ref{Sde1}),  assume that $\mu$ and $\sigma$ are locally Lipschitz and the solution $X$ has no finite time explosion. For 
$m>\abs{x_0}$, define $Y^{(m)}$ as in the proof of Theorem \ref{th.CVP}.
Let $X^n$, $Y^{n,(m)}$ be the numerical solutions from a numerical scheme $\phi$. Assume $\phi$ is well defined, local and converges uniformly in probability on compact time interval. 
 Then $\forall \ 0<T<\infty$, 
\begin{align*}
\underset{m\to \infty}{\lim} \underset{n}{\sup} \ \mathbb P \Big(X\neq Y^{(m)}  \text{ or }X^n\neq Y^{n,(m)} \Big)=0, \text{ on $[0,T].$}
\end{align*}
\end{proposition}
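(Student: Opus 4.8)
The plan is to reduce the statement to a single probability estimate about the exit time $\T^{m-1}(X)$ of the true solution, exactly as in the proof of Theorem~\ref{th.CVP}. First I would recall the key events from that proof: on $\{\T^{m-1}(X)>T\}$ the processes $X$ and $Y^{(m)}$ coincide on $[0,T]$ almost surely, so in particular $\sup_{0\le t\le T}|X_t|\le m-1$ there. Intersecting with the event $\mathscr{Y}_{n,(m)}=\{n^\alpha\sup_{0\le t\le T}|Y^{n,(m)}_t-Y^{(m)}_t|\le C\}$ (or, taking $\alpha=0$, simply a uniform-closeness event coming from the convergence-in-probability hypothesis on $\phi$) forces $\sup_{0\le t\le T}|Y^{n,(m)}_t|<m$ for $n$ large, hence $\T^m(Y^{n,(m)})>T$, hence by locality $X^n$ and $Y^{n,(m)}$ agree on $[0,T]$ as well. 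Thus on that intersection we have simultaneously $X=Y^{(m)}$ and $X^n=Y^{n,(m)}$ on $[0,T]$, so
\[
\PP\bigl(X\neq Y^{(m)}\text{ or }X^n\neq Y^{n,(m)}\text{ on }[0,T]\bigr)\le \PP\bigl(\T^{m-1}(X)\le T\bigr)+\PP\bigl((\mathscr{Y}_{n,(m)})^c\bigr).
\]

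The second step is to handle the two terms. The term $\PP(\T^{m-1}(X)\le T)$ does not depend on $n$, and by the no-finite-time-explosion assumption $\T^{m-1}(X)\uparrow\infty$ a.s. as $m\to\infty$, so this term tends to $0$ as $m\to\infty$; crucially the bound is uniform in $n$. For the term $\PP((\mathscr{Y}_{n,(m)})^c)$ one uses the hypothesis that $\phi$ converges uniformly in probability on compact time intervals for the globally Lipschitz coefficients $(\mu^{(m)},\sigma^{(m)})$: for each fixed $m$ this probability tends to $0$ as $n\to\infty$. To get a statement about $\sup_n\PP$, I would argue slightly more carefully: fix $\epsilon>0$, pick $m$ with $\PP(\T^{m-1}(X)\le T)\le\epsilon/2$, then note that for that fixed $m$ the quantity $\PP((\mathscr{Y}_{n,(m)})^c)$ is at most $\epsilon/2$ for all $n$ beyond some $n_0(m,\epsilon)$, and is automatically at most $1$ for the finitely many smaller $n$; absorbing those finitely many terms (for instance by enlarging $m$ further, or by noting the claim $\lim_m\sup_n$ only needs the tail) gives $\sup_n\PP(\cdots)\le\epsilon$ for $m$ large enough, which is the assertion.

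A cleaner way to organize the $n$-uniformity, which I would actually use, is: for fixed $m$, the set of $\omega$ on which $X\neq Y^{(m)}$ or $X^n\neq Y^{n,(m)}$ somewhere on $[0,T]$ is contained in $\{\T^{m-1}(X)\le T\}\cup\{n^\alpha\sup_{[0,T]}|Y^{n,(m)}-Y^{(m)}|>C\}$ only after restricting to $n$ large enough that $C/n^\alpha<1$; for the finitely many small $n$ I instead use the trivial bound, but observe that each such term, being a fixed probability, is itself controlled by a single further application of the exit-time estimate once $m$ grows. Concretely: $\{X^n\neq Y^{n,(m)}\text{ on }[0,T]\}\subset\{\T^m(X^n)\le T\}\cup\{\T^m(Y^{n,(m)})\le T\}$ by locality, and $\{\T^m(Y^{n,(m)})\le T\}$ together with $\{\T^{m-1}(X)>T\}$ and the closeness event is empty for large $n$; for small $n$ one simply notes $\PP(\T^m(X^n)\le T)$ and the corresponding solution term are made small by $m\to\infty$ using tightness of $\{X^n\}$ (which follows from their convergence in probability to $X$). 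Taking $\limsup$ in $m$ of the $n$-supremum then kills everything.

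The main obstacle is the interchange of $\lim_{m\to\infty}$ and $\sup_n$: the convergence-in-probability hypothesis on $\phi$ is stated for each fixed pair of globally Lipschitz coefficients and gives control only for $n$ large depending on $m$, so one must separately argue that the finitely many small-$n$ contributions are also uniformly small in $m$. This is exactly where one needs the no-finite-time-explosion property not just for $X$ but, implicitly, a non-explosion/tightness property for the approximations $X^n$ on $[0,T]$ — which is supplied by their uniform convergence in probability to the (non-exploding) $X$. Everything else is a repetition of the localization bookkeeping already carried out in the proof of Theorem~\ref{th.CVP}.
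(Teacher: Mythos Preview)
Your proposal eventually lands on the correct argument, and in substance it matches the paper's, but the organization takes a detour and the first decomposition you write down invokes a hypothesis that is not the one actually assumed in this proposition.

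The paper's proof is more direct than your initial reduction. Using locality it observes immediately that $\T^m(X)=\T^m(Y^{(m)})$ and $\T^m(X^n)=\T^m(Y^{n,(m)})$, so
\[
\PP\bigl(X\neq Y^{(m)}\text{ or }X^n\neq Y^{n,(m)}\text{ on }[0,T]\bigr)=\PP\bigl(\T^m(X)<T\text{ or }\T^m(X^n)<T\bigr),
\]
and then controls the right-hand side. For large $n$ it uses the stated hypothesis that $X^n\to X$ uniformly in probability (not that $Y^{n,(m)}\to Y^{(m)}$): once $\PP(\sup_{[0,T]}|X^n-X|\ge 1)<\epsilon/3$ and $\PP(\T^{m_1}(X)\le T)<\epsilon/3$, one gets $\PP(\T^{m_1+1}(X^n)\le T)<2\epsilon/3$. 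For the finitely many remaining $n\le n'$ it simply sets $Q^\star_t=\max\{X^*_t,X^{1,*}_t,\ldots,X^{n',*}_t\}$, notes that $Q^\star$ does not explode on $[0,T]$, and enlarges $m$ so that $\PP(\T^m(Q^\star)\le T)\le\epsilon$.

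Your first bound $\PP(\cdots)\le \PP(\T^{m-1}(X)\le T)+\PP((\mathscr{Y}_{n,(m)})^c)$ is the Theorem~\ref{th.CVP} decomposition, but the event $\mathscr{Y}_{n,(m)}$ concerns $|Y^{n,(m)}-Y^{(m)}|$, and the proposition does \emph{not} assume convergence of $\phi$ for the truncated (globally Lipschitz) coefficients; it only assumes $X^n\to X$. So that first step is not justified as written. You correctly sense the issue and in your last paragraph switch to exactly the paper's mechanism: reduce to $\{\T^m(X^n)\le T\}$, handle large $n$ via $X^n\to X$, and handle the finitely many small $n$ via non-explosion of each fixed $X^n$ (your ``tightness of $\{X^n\}$'' is the paper's $Q^\star$ construction). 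If you delete the $\mathscr{Y}_{n,(m)}$ detour and start from the exit-time identity above, your argument coincides with the paper's.
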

\begin{proof}
Since the Euler scheme is local, 
\begin{align}
 \T^m(X)=\T^m(X^{(m)}), \  \T^m(X^n)=\T^m(X^{n,(m)}).
\end{align} 
Thus, we have on $[0,T]$
\begin{align}\label{Lemma1-eq1}
 \mathbb P \Big(X \neq Y^{(m)}  \text{ or }X^n\neq Y^{n, (m)} \Big)=
 \mathbb P \Big( \T^{m}(X)<T  \text{ or }   \T^{m}(X^n)<T  \Big).
\end{align}
Since $X$ has no finite time explosion, 
$\forall \epsilon>0$, there exists $ m_1=m_1(\epsilon)$ large enough so that  
\begin{align*} 
\mathbb P( \T^{m_1}(X)\le T)< \frac{\epsilon}{3}.
\end{align*}
By the uniform convergence in probability of $X^n$ on $[0,T]$, there exists $n^\prime=n'(\epsilon)$ 
such that $\forall n>n^{\prime}$, we have
\begin{align*} 
\mathbb P(\underset{0\leq s \leq T} {\sup} \abs{X^n_s-X_s} \geq 1)<\frac{\epsilon}{3}
\end{align*}
and 
 \begin{align*} 
\mathbb P(\T^{m_1+1}(X^n) \leq T)\leq \mathbb P(\T^{m_1}(X) \leq T)+
\mathbb P(\underset{0\leq s \leq T} {\sup} \abs{X^n_s-X_s} \geq 1)< \frac{2}{3}\epsilon.
\end{align*}
Hence when $n>n^{\prime}$, 
 \begin{align*}  
\mathbb{P}(\T^{m_1+1}(X^n)\le T  \text{ or } \T^{m_1+1}(X)\le T)  \leq \mathbb P(\T^{m_1}(X) \leq T)+
\mathbb P( \T^{m_1+1}(X^n) \leq T) <\epsilon.
\end{align*}
Now for $n\leq n^\prime$, take $Q_t^\star=\max \Big \{ { X^*_t, X^{1,*}_t, X^{2,*}_t \dots X^{n^\prime,*}_t} \Big \}$, 
where $X^*$ indicates the running maximum of the absolute process. As the true solution $X$ and the Euler 
scheme numerical solutions have no explosion on  $[0,T]$, and $n^\prime $ is finite, the process 
$Q_t^\star$ has no explosion either. We can always find $m=m(n')>m_1+1$ large enough so that for the hitting time 
of $Q_t^\star$,  $\mathbb P(\T^{m}(Q_t^\star)  \leq T)\le \epsilon$. 
Together with (\ref{Lemma1-eq1}), the proof concludes.
\end{proof}
\begin{remark}\label{r1}
Proposition \ref{lemma3-1} also holds for the multidimensional case with the same techniques used in the proof. 
\end{remark}

\bigskip

\begin{remark}\label{r2}
Kurtz and Protter \cite{KT1991-2} obtained the weak limit for the sequence of normalized error process for the Euler scheme under the condition that $\mu, \sigma$ are $\mathcal{C}^1$ and of at most linear growth. Proposition \ref{lemma3-1} implies that the condition can be replaced with  $\mu, \sigma$ are $\mathcal{C}^1$ and at most linear growth condition on any compact set plus the solution has no finite time explosion.  This generalization can be found in Neuenkirch and Zähle \cite{AN}.  
\end{remark}

\bigskip
\begin{remark}
In Yan \cite{YL2005}, the $\mathcal{C}^2$ and at most growth hypotheses used for obtaining  for the weak limit of the sequence of normalized error process for the Mistein scheme, can be replaced with $\mathcal{C}^2$ and at most linear growth conditions on any compact set, plus that the solution has no finite time explosion. 
\end{remark}

\bigskip
Now we turn to prove a weak convergence result for the normalized error of the Euler scheme with the locally Lipschitz assumption.
Define $Z^n$ as follows
 \begin{align*}
Z_t^{n11}&= \int_0^t  \sqrt{n}\Delta s^{(n)}ds, \
Z_t^{n12}= \int_0^t  \sqrt{n}\Delta s^{(n)}dW_s,\\
Z_t^{n21}&= \int_0^t  \sqrt{n}\Delta W_s^{(n)}ds,\ 
Z_t^{n22}= \int_0^t  \sqrt{n}\Delta W_s^{(n)}dW_s.
\end{align*} 
Our goal is to prove convergence in distribution for the asymptotic error process from the Euler scheme at the rate $\sqrt{n}$, which requires $Z^n$ to converge in distribution. \\

\begin{proposition}\label{propZ}
The sequence $Z^n$ is tight and converges in distribution to $Z$ under the uniform topology on compact time set, where $Z$ is independent of $W$ and $Z^{1,1}=Z^{1,2}=Z^{2,1}=0$, $\sqrt{2}Z^{2,2}$ is a standard Brownian motion.
\end{proposition}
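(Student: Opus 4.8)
The plan is to handle the four scalar components of $Z^n$ one at a time. The processes $Z^{n11}$, $Z^{n12}$ and $Z^{n21}$ will be shown to converge to $0$ uniformly on compact time sets in probability, while the last one, $Z^{n22}$, is a continuous $L^2$-martingale and will be treated via the functional central limit theorem for continuous martingales; the joint statement, and in particular independence from $W$, comes from a two-dimensional version of that theorem applied to the pair $(W,Z^{n22})$.

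First, since $0\le\Delta s^{(n)}\le T/n$, the process $Z^{n11}$ is deterministic and $\sup_{0\le t\le T}|Z^{n11}_t|\le\sqrt n\,T\cdot\tfrac{T}{n}=\tfrac{T^2}{\sqrt n}\to0$. Next, $Z^{n12}$ is a continuous $L^2$-martingale with $\langle Z^{n12}\rangle_t=n\int_0^t(\Delta s^{(n)})^2\,ds\le T^2t/n$, so by Doob's maximal inequality $\mathbb E\big[\sup_{t\le T}|Z^{n12}_t|^2\big]\le4T^3/n\to0$, and hence $Z^{n12}\to0$ uniformly on compacts in probability.

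For the remaining two components I would use the observation that $Z^{n22}=\sqrt n\int_0^{\cdot}\Delta W_s^{(n)}\,dW_s$ is a continuous $L^2$-martingale whose brackets are explicit, namely
\[
\langle Z^{n22},W\rangle_t=\sqrt n\int_0^t\Delta W_s^{(n)}\,ds=Z^{n21}_t,\qquad
\langle Z^{n22}\rangle_t=n\int_0^t(\Delta W_s^{(n)})^2\,ds .
\]
Writing $t_k=kT/n$, I would first show $Z^{n21}\to0$ uniformly on compacts in probability: splitting the time integral into the contributions of the full grid cells $[t_k,t_{k+1}]$ plus at most one partial cell, the cell contributions $\eta_k=\int_{t_k}^{t_{k+1}}(W_s-W_{t_k})\,ds$ are independent, centered, with $\mathrm{Var}(\eta_k)=\tfrac13(T/n)^3$, so their partial sums form a discrete martingale and Doob's inequality bounds the second moment of $\sqrt n$ times their running maximum by $O(1/n)$, while the partial-cell remainder is dominated by $\tfrac{T}{\sqrt n}\sup_{0\le s\le T}|W_s|\to0$ a.s. I would then show $\langle Z^{n22}\rangle_t\to t/2$ in probability: using the independence of Brownian increments over disjoint cells, a direct second-moment computation gives $\mathbb E[\langle Z^{n22}\rangle_t]=n\int_0^t\Delta s^{(n)}\,ds\to t/2$ and $\mathrm{Var}(\langle Z^{n22}\rangle_t)=O(1/n)\to0$, hence convergence in $L^2$ for each fixed $t$; since $\langle Z^{n22}\rangle$ is nondecreasing with continuous deterministic limit, this upgrades to uniform convergence on $[0,T]$ in probability.

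Given these brackets, the multidimensional functional central limit theorem for continuous local martingales (Rebolledo's theorem) applied to the pair $(W,Z^{n22})$ yields $(W,Z^{n22})\Rightarrow(W,Z^{2,2})$ uniformly on compacts, where $(W,Z^{2,2})$ is the continuous Gaussian martingale with $\langle W\rangle_t=t$, $\langle W,Z^{2,2}\rangle_t=0$ and $\langle Z^{2,2}\rangle_t=t/2$; the vanishing cross-bracket forces $Z^{2,2}$ to be independent of $W$, and $\langle Z^{2,2}\rangle_t=t/2$ is precisely the statement that $\sqrt2\,Z^{2,2}$ is a standard Brownian motion. Finally, since $Z^{n11}$, $Z^{n12}$ and $Z^{n21}$ converge to $0$ uniformly on compacts in probability, they may be adjoined to this weak limit without altering it, so $Z^n\Rightarrow(0,0,0,Z^{2,2})=Z$ with $Z$ independent of $W$, and convergence in distribution in the uniform topology delivers the asserted tightness. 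I expect the main obstacle to be the two moment estimates behind $Z^{n21}\to0$ and $\langle Z^{n22}\rangle_t\to t/2$ — keeping track of the block-diagonal covariance structure and the relevant second and fourth moments of the Brownian increments — together with the careful invocation of the martingale CLT needed to read off the independence of the limit from $W$.
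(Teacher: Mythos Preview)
Your approach is correct, but it differs from the paper in a simple way: the paper does not give a proof at all and merely invokes Theorem~5.1 of Jacod and Protter~\cite{JJ1998}. What you have written is a direct, self-contained argument that unpacks the content of that cited theorem in this particular setting. You dispose of $Z^{n11}$, $Z^{n12}$, $Z^{n21}$ by elementary moment bounds, then apply the multidimensional martingale functional CLT to the pair $(W,Z^{n22})$ after verifying $\langle Z^{n22}\rangle_t\to t/2$ and $\langle Z^{n22},W\rangle_t=Z^{n21}_t\to 0$; the independence of $Z^{2,2}$ from $W$ then drops out of the vanishing cross-bracket. This is precisely the mechanism behind the Jacod--Protter result specialized to the present driving terms, so your route is not an alternative proof so much as a transparent unpacking of the cited one. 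The benefit of your version is that it isolates exactly which computations matter and makes the independence statement visible; the paper's citation is faster but opaque. The two moment estimates you flag as potential obstacles are indeed routine: the independence of Brownian increments over disjoint cells makes the block variances additive, and the fourth-moment bound needed for $\mathrm{Var}(\langle Z^{n22}\rangle_t)=O(1/n)$ is a direct Gaussian calculation. One small cosmetic point: your bound on the partial-cell remainder in the $Z^{n21}$ argument should carry a factor of~$2$ (since $|W_s-W_{t_j}|\le 2\sup_{[0,T]}|W|$), but this is harmless.
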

{
Proposition~\ref{propZ}} is implied by Theorem 5.1 in Jacod and Protter \cite{JJ1998}.\\


\begin{proposition}\label{lemma3-2}
Consider SDE (\ref{Sde1}), and assume that $\mu(x), \sigma(x)$ are both Lipschitz and bounded. Let $X^n$ be the numerical solution 
to (\ref{Sde1}) on $[0,T]$ from the continuous Euler scheme with step size $\frac{T}{n}$. Then the sequence of normalized error processes 
$U_n=\sqrt{n}(X^n-X)$ is relatively compact.
\end{proposition}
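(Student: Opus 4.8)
The plan is to derive a closed stochastic equation for $U_n$ and then apply a stability/tightness criterion for sequences of stochastic integral equations — precisely the machinery of Kurtz and Protter \cite{KT1991-2} (or equivalently the criteria in Jacod–Protter \cite{JJ1998}) — whose hypotheses are easy to verify here because $\mu,\sigma$ are Lipschitz and bounded. First I would write both the true solution and the Euler scheme in integral form and subtract, obtaining $X^n_t - X_t = \int_0^t [\sigma(X^n_{n(s)}) - \sigma(X_s)]\,dW_s + \int_0^t [\mu(X^n_{n(s)}) - \mu(X_s)]\,ds$. The key algebraic step is to split the increment $\sigma(X^n_{n(s)}) - \sigma(X_s)$ into (i) $\sigma(X^n_{n(s)}) - \sigma(X^n_s)$, which after a first-order expansion produces the ``discretization'' term driven by $\sigma(X^n_{n(s)})\sigma'(\cdots)\Delta W^{(n)}_s$ and $\sigma(X^n_{n(s)})\mu'(\cdots)\Delta s^{(n)}$, i.e. exactly the integrands against which the processes $Z^{n,ij}$ of Proposition~\ref{propZ} are built; and (ii) $\sigma(X^n_s) - \sigma(X_s)$, which is Lipschitz-controlled by $|X^n_s - X_s| = n^{-1/2}|U_n(s)|$. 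Multiplying through by $\sqrt{n}$ then yields an equation of the schematic form
\begin{align*}
U_n(t) = \int_0^t F_n(s)\,dH^n_s + \int_0^t G_n(s)\, U_n(s^-)\, dJ^n_s + (\text{remainder}),
\end{align*}
where $H^n$ is the (tight, convergent) vector $(W, Z^{n,11}, Z^{n,12}, Z^{n,21}, Z^{n,22})$ of driving processes, $F_n, G_n$ are coefficients built from $\mu,\sigma,\sigma',\mu'$ evaluated along $X^n$ and along the discretized solution, and the remainder collects the second-order Taylor terms (quadratic in $\Delta W^{(n)}$, which are $O(1/n)$ and vanish after normalization, or are absorbed into the $Z^{n,22}$ part).

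The second step is to check the standing hypotheses of the Kurtz–Protter stability theorem: (a) the driving processes $H^n$ are good sequences of semimartingales converging in distribution — this is Proposition~\ref{propZ} together with $W$ itself, and boundedness of $\mu,\sigma$ gives the uniform controlled-variation bounds on the finite-variation parts; (b) the coefficient processes $F_n$ and $G_n$ are uniformly bounded and do not blow up — here is where boundedness of $\mu,\sigma$ and of $\sigma'$, $\mu'$ (Lipschitz $\Rightarrow$ a.e. bounded derivatives, and one may work with the Lipschitz constants directly rather than with derivatives, replacing the first-order expansion by a bounded ``divided difference'' $[\sigma(X^n_{n(s)}) - \sigma(X^n_s)]/(X^n_{n(s)} - X^n_s)$) is essential; and (c) the linear-in-$U_n$ structure of the second integral, with bounded coefficient $G_n$, is exactly the situation covered by the uniform Gronwall-type a priori estimate that yields $\sup_n \EE[\sup_{t\le T}|U_n(t)|^2] < \infty$, hence tightness of $U_n$ via the Aldous–Rebolledo criterion. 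I would first establish the $L^2$ bound $\sup_n \EE[\sup_{t\le T}|U_n(t)|^2] \le C_T$ by squaring the equation, using Doob/Burkholder–Davis–Gundy on the martingale parts, boundedness of the coefficients, the uniform bounds $\EE[\sup_{t\le T}|Z^{n,ij}_t|^2]\le C$ from Proposition~\ref{propZ}, and then Gronwall in the linear term; this bound also controls the remainder (which carries explicit powers of $n^{-1/2}$).

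The main obstacle — really the only nonroutine point — is handling the discretization term cleanly when $\sigma,\mu$ are merely Lipschitz rather than $\mathcal C^1$, so that $\sigma',\mu'$ need not exist. The fix is to never differentiate: write every increment $\varphi(X^n_{n(s)}) - \varphi(X^n_s)$ (for $\varphi \in \{\mu,\sigma\}$) as $D^{n,\varphi}_s \cdot (X^n_{n(s)} - X^n_s)$ where $D^{n,\varphi}_s$ is the divided difference (set to $0$ when the two arguments coincide), note $|D^{n,\varphi}_s|\le \mathrm{Lip}(\varphi)$ uniformly, and substitute $X^n_{n(s)} - X^n_s = -\sigma(X^n_{n(s)})\Delta W^{(n)}_s - \mu(X^n_{n(s)})\Delta s^{(n)}$ from the Euler recursion. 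This keeps all coefficient processes uniformly bounded and càdlàg, turns the whole system into the good-driving-semimartingale framework, and the $D^{n,\varphi}$ need not converge — only their uniform boundedness matters for relative compactness. (Their convergence, needed for identifying the limit, is deferred to the next result.) Everything else — BDG, Gronwall, Aldous' tightness criterion applied to the canonical decomposition of $U_n$ — is standard once this substitution is in place.
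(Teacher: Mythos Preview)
Your proposal is correct and shares the paper's key device: writing $U^n$ as a linear stochastic equation in $U^n$ with bounded divided-difference coefficients $g(x,y)=\frac{\mu(x)-\mu(y)}{x-y}$, $h(x,y)=\frac{\sigma(x)-\sigma(y)}{x-y}$ and driving processes $(W,Z^n)$, so that only boundedness of $g,h$ (not convergence) is needed for relative compactness. The difference is in how tightness is concluded. The paper does not go through an $L^2$ bound plus Aldous--Rebolledo; instead it argues that the bounded coefficient vector $\tilde f^n=(g(X^n,X),g(X^n,X^n_{n(\cdot)}),h(\ldots))$ is itself relatively compact, extracts a weakly convergent subsubsequence of $(\tilde f^{n_k},X^{n_k}_{n_k(\cdot)},X^{n_k},Z^{n_k})$, and applies the Kurtz--Protter convergence theorem for good semimartingale drivers to obtain $U^{n_k}\Rightarrow R$ directly; relative compactness of $(U^n)$ then follows from ``every subsequence has a convergent subsubsequence.'' Your route is more self-contained and avoids the (somewhat briefly justified) claim that the bounded $\tilde f^n$ are tight in path space; the paper's route has the advantage of already producing the limiting SDE \eqref{Sde4} along subsubsequences, which is exactly what is exploited in the proof of Theorem~\ref{th.3-2}. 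One small cleanup: once you commit to divided differences, the identity $\varphi(X^n_{n(s)})-\varphi(X^n_s)=D^{n,\varphi}_s\,(X^n_{n(s)}-X^n_s)$ is exact, so there is no ``second-order Taylor remainder'' to control---your remainder term is simply zero.
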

\begin{proof}
It has been proved that $Z^n$ are good sequences (see \cite{KT1991-1} for the definition of a good sequence).  From 
Proposition \ref{propZ}, $Z^n\Rightarrow Z$, where $\Rightarrow$ denotes convergence in distribution under the uniform topology on a compact time set.  The limit process $Z$ is independent of $W$ and $Z^{1,1}=Z^{1,2}=Z^{2,1}=0$, $Z^{2,2}$ is mean zero Brownian motion with $\mathbb{E}[(Z^{2,2}_t)^2]=\frac{t}{2}$.
By Corollary \ref{cor1}, we also have 
$(X^n,Z^n)\Rightarrow (X,Z)$. 
By the definition of a continuous Euler scheme, $X^n$ can also be represented as 
\begin{align*} 
X^n_t=\int_0^t \mu(X^n_{n(s)}) ds +\int_0^t \sigma(X^n_{n(s)}) dW_s.
\end{align*}
Then
\begin{align*} 
U^n_t&=\sqrt{n}(X^n_t-X_t)\\
&=\int_0^t \sqrt{n} \{ \mu(X^n_{n(s)})-\mu(X_s)\}ds+\int_0^t  \sqrt{n} \{ \sigma(X^n_{n(s)})-\sigma(X_s)\}dW_s .
\end{align*}
For $x\neq y$, define functions $g,h: \mathbb R^2\to \mathbb R$ as
\begin{align*} 
g(x,y)=\frac{\mu(x)-\mu(y)}{x-y},\quad h(x,y)=\frac{\sigma(x)-\sigma(y)}{x-y}.
\end{align*}
Since $\mu,\sigma$ are Lipschitz, $g(x,y)$ and $h(x,y)$ are bounded. 
Now we separate the error process into two terms $U^n=U^{1,n}+U^{2,n}$, where
\begin{align*} 
U^{1,n}_t=&\int_0^t \sqrt{n} \{ \mu(X^n_{n(s)})-\mu(X_s)\}ds\\
&= \int_0^t \sqrt{n}\{ \mu(X^n_s)-\mu(X_s)\}ds -\int_0^t \sqrt{n}\{ \mu(X^n_s)-\mu(X^n_{n(s)})\}ds\\
&=\int_0^t  g(X^n_s,X_s) U^{n}_t ds-\int_0^t   \frac{ \mu(X^n_s)-\mu(X^n_{n(s)})} {X_s-X^n_{n(s)}}(X^n_s-X^n_{n(s)}) \sqrt{n} ds.
\end{align*}
Note that $X^n_s-X^n_{n(s)}=\mu(X^n_{n(s)}) \Delta s^{(n)}+\sigma(X^n_{n(s)}) \Delta W_s^{(n)}$. Then
\begin{align*} 
U^{1,n}_t&=\int_0^t  g(X^n_{n(s)},X_s) U^{n}_t- g(X^n_s,X^n_{n(s)}) \big \{ \mu(X^n_{n(s)}) \sqrt{n}\Delta s^{(n)}+\sigma(X^n_{n(s)}) \sqrt{n}\Delta W_s^{(n)}\big\}ds.
\end{align*}
Similarly, 
\begin{align*} 
U^{2,n}_t&=\int_0^t  h(X^n_{n(s)},X_s) U^{n}_t -h(X^n_s,X^n_{n(s)}) \big \{ \mu(X^n_{n(s)}) \sqrt{n}\Delta s^{(n)}+\sigma(X^n_{n(s)}) \sqrt{n}\Delta W_s^{(n)}\big\}dW_s.
\end{align*}
For notational convenience, define $\tilde{f}^n$ as
\begin{align*} 
\tilde{f}^n=\big[ g(X^{n}_{s},X_s),g(X^{n}_s,X^{n}_{{n}(s)}),
h(X^{n}_{s},X_s),h(X^{n}_s,X^{n}_{{n}(s)})\big].
\end{align*} 
If $\mu,\sigma$ are also assumed to be continuously differentiable, as in Kurtz and Protter~\cite{KT1991-2}, then $\tilde{f}^n$ converges weakly uniformly to $[\mu^\prime(X),\mu^\prime(X),\sigma^\prime(X),\sigma^\prime(X)]$ on $[0,T]$. By results on weak convergence of stochastic integrals in Kurtz and Protter~\cite{KT1991-1}, $U^n$ converges weakly uniformly on $[0,T]$ as well. 

However, here $\sigma,\  \mu$ are only assumed to be Lipschitz and bounded, hence their derivatives might not be continuous or not even exist.  This would cause $\tilde{f}^n$ to fail to converge weakly.  Fortunately, by the boundedness of $\tilde{f}^n$, applying weak convergence techniques in~\cite{KT1991-1} would give relative compactness of $U^n$ under the uniform topology, which is shown in the following steps. 
   
By Prokhorov's Theorem which states that tightness is equivalent to relative compactness in our case, $\tilde{f}^n$ is also relatively compact. Then for every subsequence of  $\tilde{f}^n$, there exists a further subsubsequence $n_k$ such that $\tilde{f}^{n_k}$ converges weakly uniformly on $[0,T]$.
It is also known that $(X^n_{n(.)},X^n,\sqrt{n} Z^n)\Rightarrow (X,X,Z)$, and the sequence is a good sequence (see \cite{KT1991-2} for details). Then, we can assume on $[0,T]$,
\begin{align*} 
&\big[
f^{n_k,1}, f^{n_k,2},f^{n_k,3},f^{n_k,4},X^{n_k}_{n_k(.)},X^{n_k},Z^{n_k,1}, Z^{n_k,2},Z^{n_k,3},Z^{n_k,4}
\big] \\
&\Rightarrow [G,\tilde{G},H,\tilde{H}, X,X,0,0,0,\frac{\sqrt{2}}{2}B].
\end{align*} 
Since $Z^n$ is a good sequence and $\mu,\sigma$ are bounded, then, by proof of Theorem 3.5 in Kurtz and Protter~\cite{KT1991-2}, $U^{n_k}\Rightarrow R$ on $[0,T]$, where 
\begin{align} \label{Sde4}
R_t=\int _0^t G_t R_tds+\int _0^t H_t R_tdW_s +\frac{\sqrt{2}}{2} \int _0^t \sigma(X_t) \tilde{H}_t dB_s.
\end{align} 
Thus every subsequence of $U^n=\sqrt{n}(X^n-X)$ has a subsubsequence that converges weakly uniformly on $[0,T]$, implying that $U^n$ is relatively compact.
\end{proof}

\bigskip

\begin{remark}\label{r4}
Our next theorem is similar to results in~\cite{KT1991-2,AN} but with two important differences: We do not assume the coefficients are $\mathcal{C}^1$, but only that they are locally Lipschitz; We do not assume a linear growth condition, but rather assume only locally Lipschitz combined with no finite explosions in finite time. As a simple example, this allows for the consideration of coefficients of the form $\sigma(x)=x^\gamma$, with $\gamma >1$. In Economics, such coefficients are known as CEV (= Constant Elasticity of Variance). Usually $\gamma$ is assumed to be less than or equal to one, but here we lay the groundwork to consider $\gamma >1$ on a practical level. 
\end{remark}

\bigskip


\begin{theorem}\label{th.3-2}
Consider the SDE (\ref{Sde1}), assume that $\mu, \sigma$ are locally Lipschitz and that the solution $X$ has no finite time explosion. Further assume that
 $\sigma(x)$ is non-negative and bounded from below by some $d \in \mathbb R^+$ on any compact set. Let $\mu^\prime(x), \sigma^\prime(x)$ equal the derivatives of 
 $\mu,\sigma$ at $x$ when the derivatives exist; and that they equal $0$, when the derivatives at a point $x$ do not exist. 
 
Let $X^n$ be numerical solution from the continuous Euler scheme, and $U_n=\sqrt{n}(X^n-X)$ be the normalized error process. Then for all $ 0<T<\infty$, $U_n$ converges weakly uniformly on $[0,T]$ to $U$, where $U$ satisfies
\begin{align} \label{Sde4}
U_t=\int _0^t \mu^\prime(X_s) U_sds+\int _0^t \sigma^\prime(X_s) U_sdW_s +\frac{\sqrt{2}}{2} \int _0^t \sigma(X_s) \sigma^\prime(X_s) dB_s, \ U_0=0,
\end{align} 
where $B$ is a standard Brownian motion and is independent of $W$.
\end{theorem}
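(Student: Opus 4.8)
The plan is to reduce the theorem, by the localization of Proposition~\ref{lemma3-1}, to the situation of Proposition~\ref{lemma3-2} (so that $\mu,\sigma$ are globally Lipschitz and bounded, with $\sigma\ge d>0$), obtain relative compactness of $U_n$ from that proposition, identify every subsequential weak limit as a solution of~(\ref{Sde4}), and finally conclude by uniqueness in law of that (linear) SDE. Concretely, for $m>|x_0|$ take modifications $\mu^{(m)},\sigma^{(m)}$ as in Theorem~\ref{th.CVP}, chosen (as is possible since $\inf_{|x|\le m+1}\sigma>0$) to be globally Lipschitz, bounded, and with $\sigma^{(m)}\ge d_m>0$ everywhere; write $Y^{(m)},Y^{n,(m)}$ for the corresponding solution and Euler approximations and $U^{(m)}_n=\sqrt n\,(Y^{n,(m)}-Y^{(m)})$. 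By Proposition~\ref{lemma3-1}, $\delta_m:=\sup_n\PP(X\neq Y^{(m)}\text{ or }X^n\neq Y^{n,(m)}\text{ on }[0,T])\to 0$ as $m\to\infty$, and off that event $U_n=U^{(m)}_n$ on $[0,T]$. Let $U^{(m)}$ solve~(\ref{Sde4}) with $\mu,\sigma$ replaced by $\mu^{(m)},\sigma^{(m)}$, driven by the same $(W,B)$; since $\mu^{(m)}=\mu$, $\sigma^{(m)}=\sigma$ on $(-m,m)$ and $X=Y^{(m)}$ up to $\T^m(X)$, uniqueness for the linear SDE gives $U=U^{(m)}$ on $\{\T^{m-1}(X)>T\}$, whose probability tends to $1$. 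Hence, once $U^{(m)}_n\Rightarrow U^{(m)}$ is shown for each fixed $m$, a standard three-term bound on $|\EE F(U_n)-\EE F(U)|$ for bounded continuous $F$ (let $n\to\infty$, then $m\to\infty$) yields $U_n\Rightarrow U$. So from now on assume $\mu,\sigma$ globally Lipschitz and bounded with $\sigma\ge d>0$.

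By Proposition~\ref{lemma3-2}, $U_n=\sqrt n(X^n-X)$ is relatively compact, and by Corollary~\ref{cor1} and Proposition~\ref{propZ} so is $(X^n_{n(\cdot)},X^n,\sqrt n\,Z^n,W,U_n)$, with $Z^n$ a good sequence and $(X^n_{n(\cdot)},X^n,Z^n)\Rightarrow(X,X,Z)$, $Z^{1,1}=Z^{1,2}=Z^{2,1}=0$, $Z^{2,2}=\tfrac{\sqrt2}{2}B$, $B\perp W$ a Brownian motion. Fix an arbitrary subsequence and pass to a further subsequence $n_k$ along which this family, together with the vector $\tilde f^{n_k}$ of difference quotients from the proof of Proposition~\ref{lemma3-2}, converges in law, the limit of $U_{n_k}$ being some process $\bar U$. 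It then suffices to show $\bar U$ satisfies~(\ref{Sde4}): that SDE is linear in $U$ with bounded adapted coefficients $\mu^\prime(X_\cdot),\sigma^\prime(X_\cdot)$, hence has pathwise uniqueness by Gronwall, hence uniqueness in law; so all subsequential limits agree and $U_n\Rightarrow U$.

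The main step is the identification of the limiting coefficients, and this is where $\sigma\ge d>0$ enters. By Rademacher's theorem the non-differentiability sets $N_\mu,N_\sigma$ of $\mu,\sigma$ are Lebesgue-null; since $d\langle X\rangle_s=\sigma^2(X_s)\,ds$ with $\sigma^2\ge d^2>0$, the occupation time formula (see~\cite{RD2013}) gives, for any Lebesgue-null $N$, $\int_0^T\ind\{X_s\in N\}\,ds=\int_{\RR}\ind_N(x)\,\sigma^{-2}(x)L^x_T\,dx=0$ a.s. Thus for $ds\otimes d\PP$-a.e.\ $(s,\omega)$, $X_s(\omega)$ is a differentiability point of $\mu$ and $\sigma$; combined with the uniform convergence $X^n\to X$ of Corollary~\ref{cor1} (so $X^n_s\to X_s$ and $X^n_{n(s)}\to X_s$ uniformly in $s$), the definition of the derivative gives $g(X^n_s,X_s),\,g(X^n_s,X^n_{n(s)})\to\mu^\prime(X_s)$ and $h(X^n_s,X_s),\,h(X^n_s,X^n_{n(s)})\to\sigma^\prime(X_s)$ for $ds\otimes d\PP$-a.e.\ $(s,\omega)$ (the convention on the null set $\{X^n_s=X_s\}$ being irrelevant since the corresponding contributions vanish there). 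As $g,h$ are bounded, bounded convergence upgrades this to convergence in $L^1(ds\otimes d\PP)$, so every subsequential weak limit of $\tilde f^n$ equals $(\mu^\prime(X),\mu^\prime(X),\sigma^\prime(X),\sigma^\prime(X))$ there; note the convention $\mu^\prime=\sigma^\prime=0$ off the differentiability set is immaterial since $X_s$ lies in that set for a.e.\ $s$. Now pass to the limit in the representation of $U^n$ from the proof of Proposition~\ref{lemma3-2}: the integrals against $Z^{n11},Z^{n12},Z^{n21}$ have bounded integrands and vanish since $Z^n$ is good with these components $\Rightarrow0$ (weak convergence of stochastic integrals,~\cite{KT1991-1}); the $Z^{n22}$ integral, again by~\cite{KT1991-1} together with the $L^1$-identification of the limit of $h(X^n_\cdot,X^n_{n(\cdot)})\sigma(X^n_{n(\cdot)})$ as $\sigma(X_\cdot)\sigma^\prime(X_\cdot)$, converges to $\pm\tfrac{\sqrt2}{2}\int_0^{\cdot}\sigma(X_s)\sigma^\prime(X_s)\,dB_s$ (the sign is harmless, $-B$ being again a Brownian motion independent of $W$); and passing to a Skorokhod representation, $U^{n_k}\to\bar U$ uniformly while $g(X^{n_k}_\cdot,X_\cdot),h(X^{n_k}_\cdot,X_\cdot)\to\mu^\prime(X_\cdot),\sigma^\prime(X_\cdot)$ in $L^1([0,T])$ (and, with standard moment bounds on $U^n$, in the sense needed for $L^2$-continuity of the Itô integral), so $\int_0^{\cdot}g(X^{n_k}_s,X_s)U^{n_k}_s\,ds\to\int_0^{\cdot}\mu^\prime(X_s)\bar U_s\,ds$ and $\int_0^{\cdot}h(X^{n_k}_s,X_s)U^{n_k}_s\,dW_s\to\int_0^{\cdot}\sigma^\prime(X_s)\bar U_s\,dW_s$. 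Hence $\bar U$ solves~(\ref{Sde4}), and the argument closes as above.

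The crux, and the only genuinely new difficulty compared with~\cite{KT1991-2,AN}, is the identification step: recognizing that the hypothesis $\sigma\ge d>0$ is exactly what makes the occupation measure of $X$ absolutely continuous, so that the Lebesgue-null non-differentiability sets of the merely locally Lipschitz coefficients are invisible to $X$, and then extracting a mode of convergence of the difference quotients $g,h$ — namely $L^1(ds\otimes d\PP)$ — strong enough to survive multiplication by the only weakly convergent normalized errors $U^n$ when passing to the limit in the defining integral equation.
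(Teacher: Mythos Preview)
Your proof is correct and follows essentially the same architecture as the paper's: localize via Proposition~\ref{lemma3-1} to reduce to bounded globally Lipschitz coefficients with $\sigma\ge d>0$, invoke Proposition~\ref{lemma3-2} for relative compactness, pass to a Skorokhod representation, show every subsequential limit solves~(\ref{Sde4}), and conclude by uniqueness. The two places where you differ are implementation details, and in both cases your choice is cleaner than the paper's. First, to show $\int_0^T\ind\{X_s\in N\}\,ds=0$ for the Lebesgue-null non-differentiability set $N$, you invoke the occupation time formula with local time directly, whereas the paper computes the same quantity via the Green function and speed measure between hitting times (Corollary~3.8, Chapter~VII of~\cite{RD2013}) and then lets the barriers go to $\pm\infty$; your route is shorter and makes the role of the hypothesis $\sigma\ge d>0$ transparent. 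Second, for uniqueness of~(\ref{Sde4}) you use linearity plus boundedness of $\mu'(X_\cdot),\sigma'(X_\cdot)$ and Gronwall, while the paper appeals to Engelbert--Schmidt; the linear-Gronwall argument is more elementary and perfectly adequate here. The paper's exposition is more explicit in one respect: it decomposes the discrepancy into twelve terms $G^{nij},F^{nij}$ and disposes of each with BDG, H\"older, and the uniform $L^2$ bound $\sup_n\EE\sup_{s\le T}|U^n_s|^2<\infty$ from~\cite{KP1999}. Your passage ``with standard moment bounds on $U^n$, in the sense needed for $L^2$-continuity of the It\^o integral'' is pointing at exactly this bound, and you should state it explicitly when you flesh out the argument, since it is what allows you to trade the merely $L^1(ds\otimes d\PP)$ convergence of $h(X^{n_k}_\cdot,X_\cdot)$ against the uniform convergence of $U^{n_k}$ in the It\^o integral.
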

\begin{proof}
Use Proposition \ref{lemma3-1} and apply the localization technique, we can assume $\mu,\sigma$ 
are bounded and globally Lipschitz and there exists $d>0$, such that for all $x \in \mathbb R$ we have 
$\abs{\sigma(x)}>d$ without loss of generality.  In what
follows we denote by $K$ a constant that bounds $|\mu|,|\sigma|$ and the Lipschitz constants of $\mu,\sigma$.

Define $g(x,y),h(x,y)$ as in Proposition \ref{lemma3-2}.  By Proposition \ref{propZ}
\begin{align}
(Z^{n11},Z^{n12},Z^{n21},Z^{n22})\Rightarrow Z=(0,0,0,\frac{\sqrt{2}}{2}B),  \ \text{on $[0,T]$.}
\end{align}
$B$ is a standard Brownian motion and is independent of $W$. 
Proposition \ref{lemma3-2} shows $U^n_t$ is relatively compact. Thus for
 any subsequence $n^\prime$, there exists a subsubsequence 
$n_k^{\prime}$ of $n^{\prime}$ and a process $R$ in $\mathcal{C}[0,T]$, such that $U^{n_k^{\prime}}_t \Rightarrow R$. SDE (\ref{Sde4}) has unique weak solution as it satisfies the Engelbert-Schmidt conditions, (see \cite{EH1985} for details). 
To prove $U^n \Rightarrow U$, it is sufficient to prove that $R$ is a weak solution to SDE (\ref{Sde4}).

Because $(U^{n_k^{\prime}},X,W,Z^n) \Rightarrow (R,X,W,Z)$, by the almost sure representation theorem (Theorem 1.10.4 on
 page 59 of van der Vaart and Wellner \cite{VDV1996}), there exists a probability space $(\bar{\Omega},\bar{\mathcal{F}},\bar{P})$ and 
 a sequence of processes $\tilde{Y}_k$ and $Y$, with 
 $\mathcal {L} (Y^k)=\mathcal{L}(U^{n_k^{\prime}},X,W,Z^n)$ for all $k\geq 1$, 
such that $\mathcal {L} (Y)=\mathcal{L}(R,X,W,Z)$, and $Y^k \overset{a.s.}{\to}Y$ uniformly on $[0,T]$. 
If we could prove that the first element of $Y$ is a weak solution to SDE (\ref{Sde4}), it follows immediately that $R$ is also a weak solution to (\ref{Sde4}). 

Thus, without loss of generality, we assume $(U^{n},X,W,Z^n)\overset{a.s.}{\to} (R,X,W,Z)$  as 
$n\rightarrow \infty$ and we try to prove $R$ is a weak solution to (\ref{Sde4}). In particular, for fixed $T>0$ we remove $\A$
a set of probability  $\PP(\A)=0$, such that for all $\omega\in \A^c$ and uniform 
in $[0,T]$, we have the convergence
$$
(U^{n},X,W,Z^n)\to (R,X,W,Z).
$$
We first present one known result for the continuous Euler scheme under the condition that $\mu,\sigma$ 
are globally Lipschitz, stated here as (\ref{bdd1}). The proof of (\ref{bdd1}) can be found in 
 Kloeden \cite{KP1999}, proof of Theorem 10.2.2.
\begin{align}\label{bdd1}
\underset{n}{\sup} \ \mathbb E \underset{0<s\leq T}{\sup} \abs{U^{n}_s}^2< \infty.
\end{align} 
Since $U^n\overset{a.s.}{\to} R$ on $[0,T]$, by Fatou's lemma, we also have
\begin{align}\label{bd2}
 \ \mathbb E \underset{0<s\leq T}{\sup} \abs{R_s}^2< \infty.
\end{align} 
 From the definition of $U^n$, we have $U^n=\sqrt{n}(X^n-X)=U^{1,n}+U^{2,n}$, where $U^{1,n},U^{2,n}$ are the same as in proof of Lemma \ref{lemma3-2}.
Since the Lipschitz condition implies differentiability almost everywhere, we can find subset $A$ of $\mathbb{R}$ with Lebesgue measure 0 such that both $\mu$ and 
$\sigma$ are differentiable on $\mathbb{R}\cap A^c$. Define $I_1=I_{\{s: X_s \in  A^c \}}$ and $I_2=I_{\{s: X_s \in A\}}$. We analyze the following terms, $i=1,2$,
\begin{align*} 
G^{ni1}_t&=\int_0^t  I_i \ \{g(X^n_{n(s)},X_s) U^{n}_t-\mu^\prime(X_t) R_t \}ds, \\
G^{ni2}_t&=\int_0^t  I_i \ g(X^n_s,X^n_{n(s)})\mu(X^n_{n(s)}) \sqrt{n}\Delta s^{(n)} ds,\\
G^{ni3}_t&=\int_0^t  I_i \ g(X^n_s,X^n_{n(s)})\sigma(X^n_{n(s)}) \sqrt{n} \Delta W_s^{(n)} ds,\\
F^{ni1}_t&=\int_0^t   I_i \  \{h(X^n_{n(s)},X_s) U^{n}_t-\sigma^\prime(X_t) R_t \}dW_s, \\
F^{ni2}_t&=\int_0^t  I_i \ h(X^n_s,X^n_{n(s)})\mu(X^n_{n(s)}) \sqrt{n}\Delta s^{(n)} dW_s,\\
F^{ni3}_t&=\int_0^t  I_i \ h(X^n_s,X^n_{n(s)})\sigma(X^n_{n(s)}) \sqrt{n} \Delta W_s^{(n)} dW_s- \frac{\sqrt{2}}{2} \int _0^t  I_i\sigma(X_t) \sigma^\prime(X_t) dB_s.
\end{align*}
Note that 
\begin{align}\label{eqGF}
\begin{split}
&\sum_{i=1}^2 \sum_{j=1}^3 (G^{nij}+F^{nij})\\
&=U^n-\Big\{\int _0^t \mu^\prime(X_t) R_tds+\int _0^t \sigma^\prime(X_s) R_sdW_s +\frac{\sqrt{2}}{2} \int _0^t \sigma(X_s) \sigma^\prime(X_s) dB_s\Big\}.
\end{split}
\end{align}
Our goal is to show that each term of $G^{nij},F^{nij}$ converges to a $0$ process on $[0,T]$ in distribution.


Consider term $G^{n11}$. Since $(X^n,X^n_{n(.)},U^n) \overset{a.s.}{\to}(X,X,R)$ as $n \to \infty$, and $\mu$ differentiable on $A^c$, for each $\omega \in \A^c$, 
 \begin{align*}
I_{\{X_t \in A^c \}}g(X^n_{n(t)},X_t) U^{n}_t {\rightarrow} I_{\{X_t \in A^c \}}\mu^\prime(X_t) R_t, 
\quad \text{pointwise in $t$}.
\end{align*}
Since $U^n$ is a continuous process and the convergence is uniform, 
it's limit $R$ will be continuous as well, and moreover 
$$
R_T^*(\omega)=\underset{0\leq s \leq T}{\sup}\abs{R_s}(\omega)<\infty,\,\,\,
\sup\limits_n\underset{0<s\leq T}{\sup} \abs{U^{n}_s}(\omega)<\infty.
$$
By the globally Lipschitz condition on $\mu$,
\begin{align*}
\abs{g(X^n_{n(t)},X_t) U^{n}_t-\mu^\prime(X_t) R_t } \leq K ( \underset{0<s\leq T}{\sup} \abs{U^{n}_s}+\underset{0<s\leq T}{\sup} \abs{R_s} ).
\end{align*}
Applying the dominated convergence theorem, $G^{n11}$ converges to 0 uniformly almost surely on $[0,T]$. 

Consider term $G^{n21}$. By (\ref{bdd1}) and (\ref{bd2}) there exists $C_1>0$
\begin{align}\label{G21-1}
\begin{split}
\mathbb{E} (G^{n21})&\leq \mathbb{E} \big[ \int_0^T  I_{\{s: X_s \in  A \}} K \big\{\abs{U^{n}_s}+\abs{ R_s} \big\}ds \big] \\
& \leq K\Big(\mathbb{E} \Big[  \big(\underset{0<s\leq T}{\sup} \abs{U^{n}_s}+  \underset{0<s\leq T}{\sup} \abs{R_s}\big)^2\Big ]\Big)^{\frac{1}{2}} \Big(\mathbb{E}\Big[ \int_0^T  I_{\{s: X_s \in  A \}} ds\Big]\Big)^{\frac{1}{2}} \\
& \leq C_1 \mathbb{E}\Big[ \int_0^T  I_{\{s: X_s \in  A \}} ds\Big].
\end{split}
\end{align}
By Corollary 3.8 in Chap 7 of Revuz and Yor \cite{RD2013}, let $\T^a, \T^b$ be hitting time of $X_s$ and $a<b$, then
\begin{align*}
\mathbb{E}\Big[ \int_0^{\T^a\wedge \T^b}  I_{\{s: X_s \in  A \}} ds\Big]= \int_0^{{\T^a}\wedge{\T^b}} G_I(x_0,y)I_{\{y \in  A \}}m(dy),
\end{align*}
where 
\begin{align*}
s(x)&=\int_c^x \exp \Big (-\int_c^y 2 \mu(z) \sigma^{-2}(z)dz  \Big ) dy, \quad \forall c\in \mathbb R; \\
G_I&=\begin{cases}
      \frac{(s(x)-s(a))(s(b)-s(y))}{s(b)-s(a)}, & \ a\leq x\leq y\leq b, \\
      \frac{(s(y)-s(a))(s(b)-s(x))}{s(b)-s(a)}, & \ a\leq y\leq x\leq b,\\
      0, & \ otherwise;
      \end{cases}  \\
m(dx)&=\frac{2}{s^\prime(x) \sigma^2(x)}dx.
\end{align*}
Recall that $s$ is the scale function, $G_I$ is the Green function and $m(dy)$ is the speed measure.

By the boundedness of $\mu,\sigma$, we have $G_I(x_0,y)$ and $\frac{2}{s^\prime(x) \sigma^2(x)}$ are bounded. Since $A$ has Lebesgue measure $0$
\begin{align*}
\mathbb{E}\Big[ \int_0^{T\wedge{\T^a}\wedge{\T^b}}  I_{\{s: X_s \in  A \}} ds\Big]\le 
\mathbb{E}\Big[ \int_0^{{\T^a}\wedge{\T^b}}  I_{\{s: X_s \in  A \}} ds\Big]=0.
\end{align*}  
Let $a\to -\infty, b\to \infty$, and apply Fatou's lemma, 
\begin{align}\label{bd3}
\mathbb{E}\Big[ \int_0^{T}  I_2 ds\Big]=\mathbb{E}\Big[ \int_0^{T}  I_{\{s: X_s \in  A \}} ds\Big]=0
\end{align}
Together with (\ref{G21-1}), we have $G^{n21}=0$. 

Consider $F^{n11}$, from the Burkholder-Davis-Gundy inequality, there exists $C_3>0$ s.t.
\begin{align}\label{eqFn11}
\begin{split}
\mathbb E \Big[\underset{0<s\leq T}{\sup} \abs{ F^{n11}_t } \Big ]&\leq 
C \ \mathbb E \Big[\Big( \int_0^T  I_1 \big(h(X^n_{n(s)},X_s) U^{n}_s-\sigma^\prime(X_s) R_s \big)^2ds \Big)^{\frac{1}{2}} \Big] \\
& \leq C_3  \mathbb E \Big[ \Big(\int_0^T  I_1 \big \{h(X^n_{n(s)},X_s) (U^{n}_t-R_t)\big\}^2 ds \Big)^{\frac{1}{2}}\Big]\\
&+C_3\mathbb E \Big[ \Big(\int_0^T  I_1 \big\{R_t  (h(X^n_{n(s)},X_s)-\sigma^\prime(X_t))\big\}^2 ds \Big)^{\frac{1}{2}}\Big].
\end{split}
\end{align}
Consider the first term on the right side of (\ref{eqFn11}). Since $|h|\le K$,
\begin{align*}
&\mathbb E\Big[ \Big(\int_0^T   I_1\big \{h(X^n_{n(s)},X_s) (U^{n}_s-R_s)\big\}^2 ds \Big )^{\frac{1}{2}} \Big]
\leq  K\, T\,\mathbb E \Big(\underset{0<s\leq T}{\sup} \abs{U^{n}_s-R_s}\Big).
\end{align*}
On the one hand $\underset{0<s\leq T}{\sup} \abs{U^{n}_s-R_s}\overset{a.s.}{\to} 0$. On the other by (\ref{bdd1}) and (\ref{bd2}), 
we get 
\begin{align*}
\underset{n}{\sup} \Big(\mathbb E\Big[ \underset{0<s\leq T}{\sup} \abs{U^{n}_s-R_s}^2\Big]\Big)< \infty,
\end{align*}
which gives a uniform integrability condition to ensure
\begin{align*}
\underset{n\to \infty}{\lim} \mathbb E \Big(\underset{0<s\leq T}{\sup} \abs{U^{n}_s-R_s}\Big)=0.
\end{align*}
Thus the first term on the right side of (\ref{eqFn11}) converges to $0$. For the second term, an application of the Hölder's inequality gives
\begin{align*}
&\mathbb E  \Big[\Big(\int_0^T  I_1 \big\{R_s  (h(X^n_{n(s)},X_s)-\sigma^\prime(X_s))\big\}^2 ds\Big)^{\frac{1}{2}}\Big]\\
&\leq  \mathbb E\Big[ \underset{0<s\leq T}{\sup} \abs{R_s} \Big( \int_0^T  I_1 \big\{h(X^n_{n(s)},X_s)-\sigma^\prime(X_s)\big\}^2 ds\Big)^{\frac{1}{2}}\Big]\\
&\leq  \Big(\mathbb E\Big[ \underset{0<s\leq T}{\sup} \abs{R_s}^2\Big]\Big)^{\frac{1}{2}}
 \Big( \mathbb E \Big[ \int_0^T  I_1 \big\{h(X^n_{n(s)},X_s)-\mu^\prime(X_s)\big\}^2 ds \Big]\Big)^{\frac{1}{2}}.
\end{align*}
For each $\omega \in \A^c$, we have 
$I_{\{t: X_t \in  A^c \}} h(X^n_{n(t)},X_t)\overset{a.s.}{\to} \ I_{\{t: X_t \in  A^c \}}\sigma^\prime(X_t)$ pointwise in $t$, and 
$\abs{h(X^n_{n(s)},X_s)}, \abs{\mu^\prime(X_s)}$ are uniformly bounded by $K$. 

From the dominated convergence theorem, 
\begin{align*}
\underset{n\to \infty}{\lim}  \mathbb E \Big( \int_0^T  I_{\{s: X_s \in \mathbb{R}\cap A^c \}} 
\big\{h(X^n_{n(s)},X_s)-\sigma^\prime(X_s)\big\}^2 ds \Big)=0.
\end{align*}
With (\ref{bd2}), we have the second term of right side of (\ref{eqFn11}) also converges to $0$. Thus
\begin{align*}
\underset{n\to \infty}{\lim} \mathbb E \underset{0<s\leq t}{\sup} \abs{ F^{n11}_t } =0.
\end{align*}
For the term $ F^{n21}$, we would like to prove
\begin{align}\label{lim1}
\underset{n\to \infty}{\lim} \mathbb E \underset{0<s\leq T}{\sup} \abs{ F^{n21}_s } =0.
\end{align}
Similarly to the analysis of $F^{n11}$, to prove (\ref{lim1}) we are only left to prove 
 \begin{align*}
\underset{n\to \infty}{\lim} \mathbb E \big( \int_0^T  I_2 \big\{h(X^n_{n(s)},X_s)-\sigma^\prime(X_s)\big\}^2 ds \big)=0, 
\end{align*}
which is implied by (\ref{bd3}) and boundedness of $\abs{h(X^n_{n(s)},X_s)}, \abs{\sigma^\prime(X_s)}$ .\\

Consider the terms $G^{n12},G^{n13},G^{n22},G^{n23},F^{n12},F^{n22}$ all of which converge to the constant 
process 0 almost surely uniformly on $[0,T]$ because $g,h,\mu,\sigma$ are bounded and 
$(Z^{n11},Z^{n12},Z^{n21})\overset{a.s.}{\to}(0,0,0)$ uniformly on $[0,T]$. \\
For dealing with the last two terms $F^{n13}$ and $F^{n23}$, we first define $\tilde{F}^{n13}$ as 
 \begin{align*}
\tilde F^{n13}_t&=\int_0^t  I_1\ \big\{ h(X^n_s,X^n_{n(s)})\sigma(X^n_{n(s)})-\sigma^\prime(X_s)\sigma(X_s)  \big\} \sqrt{n} \Delta W_s^{(n)} dW_s.
 \end{align*}
From the Burkholder-Davis-Gundy inequality, there exists $C_3>0$ such that
\begin{align*}
\mathbb E [\underset{0<s\leq T}{\sup} \abs{ \tilde F^{n13}_s } ]\leq 
C_3 \ \mathbb E \Big[\Big( \int_0^T  I_1 \big(h(X^n_s,X^n_{n(s)})\sigma(X^n_{n(s)})-\sigma^\prime(X_s)\sigma(X_s) \big)^2 (\sqrt{n} \Delta W_s^{(n)})^2 ds \Big)^{\frac{1}{2}} \Big] .
\end{align*}  
Applying Cauchy-Schwarz inequality to the right side, there exists $C^\prime_4, \ C_4>0$ s.t.
\begin{align*}
&\mathbb E [\underset{0<s\leq T}{\sup} \abs{ \tilde F^{n13}_s } ]\\
&\leq C^\prime_4\mathbb E\Big[ \Big(  \int_0^T  I_1 \big(h(X^n_s,X^n_{n(s)})\sigma(X^n_{n(s)})-\sigma^\prime(X_s)\sigma(X_s) \big)^4 ds  \Big)^{\frac{1}{4}}  \Big(  \int_0^T  I_1 \big(  \sqrt{n} \Delta W_s^{(n)} \big)^4 ds \Big)^{\frac{1}{4}} \Big]\\
&\leq C^\prime_4\Big[ \mathbb E \Big(  \int_0^T  I_1 \big(  \sqrt{n} \Delta W_s^{(n)}  \big)^4 ds  \Big)^{\frac{1}{2}} \Big]^{\frac{1}{2}}
\Big[ \mathbb E \Big(  \int_0^T  I_1 \big(h(X^n_s,X^n_{n(s)})\sigma(X^n_{n(s)})-\sigma^\prime(X_s)\sigma(X_s) \big)^4 ds  \Big)^{\frac{1}{2}} \Big]^{\frac{1}{2}}  \\
&\leq C_4 \Big[ \mathbb E \Big(  \int_0^T  I_1 \big(h(X^n_s,X^n_{n(s)})\sigma(X^n_{n(s)})-\sigma^\prime(X_s)\sigma(X_s) \big)^4 ds  \Big)^{\frac{1}{2}} \Big]^{\frac{1}{2}}.
\end{align*}  
Since $h,\sigma,\sigma^{\prime}$ are bounded, by the dominated convergence theorem,
\begin{align*}
\underset{n\to \infty}{\lim}\mathbb E \Big[\underset{0<s\leq t}{\sup} \abs{ \tilde F^{n13}_t } \Big]
=\underset{n\to \infty}{\lim}\mathbb E \Big(  \int_0^T  I_1 \big(h(X^n_s,X^n_{n(s)})\sigma(X^n_{n(s)})-\sigma^\prime(X_s)\sigma(X_s) \big)^4 ds  \Big)^{\frac{1}{2}}=0.
\end{align*}
Thus $\tilde F^{n13} \overset{L^1}{\to} 0$ uniformly on $[0,T]$. We define  $\bar{F}^{n13}$ as 
 \begin{align}
\bar F^{n13}_t
&=\int_0^T  I_1\ \sigma^\prime(X_s)\sigma(X_s) dZ^{n22}
- \int_0^T  I_1\ \sigma^\prime(X_s)\sigma(X_s) dB_s.
\end{align}
Since $Z^{n22}\overset{a.s.}{\to} B_s $ uniformly on $[0,T]$ and $Z^{n22}$ is a good sequence, the result on convergence in probability of stochastic integrals in Protter and Kurtz \cite{KT1991-1} leads to $\bar F^{n13} \overset{p}{\to} 0$ uniformly on $[0,T]$. As $F^{n13}=\tilde F^{n13}+ \bar F^{n13}$, 
$ F^{n13} \overset{p}{\to} 0$.\\
For the last term $F^{n23}$, applying the Burkholder-Davis-Gundy inequality first, then using the same technique as in bounding $\tilde F^{n13}$, together with (\ref{bd3}), give 
\begin{align*}
&\underset{n\to \infty}{\lim}\mathbb E \Big[\underset{0<s\leq T}{\sup} \abs{\int_0^T  I_2 \ h(X^n_s,X^n_{n(s)})\sigma(X^n_{n(s)}) \sqrt{n} \Delta W_s^{(n)} dW_s } \Big]=0,\\
&\underset{n\to \infty}{\lim} \mathbb E \Big[\underset{0<s\leq T}{\sup} \abs{\int _0^T  I_2\sigma(X_s) \sigma^\prime(X_s) dB_s } \Big]=0.
\end{align*}
Thus $F^{n23} \overset{L^1}{\to} 0$ uniformly on $[0,T]$. Each of the $G$ and $F$ terms converges to $0$ uniformly on $[0,T]$ either almost surely or in $L^1$ or in probability.
Then, by (\ref{eqGF}), $U^n \overset{p}{\to} \tilde R$ uniformly on $[0,T]$, where
\begin{align*}
\tilde R_t=\int _0^t \mu^\prime(X_s) R_sds+\int _0^t \sigma^\prime(X_s) R_sdW_s +\frac{\sqrt{2}}{2} \int _0^t \sigma(X_s) \sigma^\prime(X_s) dB_s .
\end{align*}
Since also $U^n\overset{a.s}{\to} R$ on $[0,T]$, the two limits must equal each other, and $R$ follows
\begin{align}\label{errorsde}
R_t=\int _0^t \mu^\prime(X_t) R_tds+\int _0^t \sigma^\prime(X_s) R_sdW_s +\frac{\sqrt{2}}{2} \int _0^t \sigma(X_s) \sigma^\prime(X_s) dB_s .
\end{align}
This concludes the proof.
\end{proof}

\begin{remark}\label{r3}
Both in Kurtz and Protter~\cite{KT1991-2} and Neuenkirch and Zähle \cite{AN}, $\mu$ and $\sigma$ are assumed to be $
\mathcal{C}^1$. Since Lipschitz continuity does not imply differentiability, the key part in proof of Theorem~\ref{th.3-2} is to show that the time the weak limit error process spends on the set where $\mu$ and $\sigma$ are not differentiable has Lebesgue measure 0. 

\end{remark}
\section{Study of The Normalized Limit Error Process}
With the weak limit of normalized error process for the Euler scheme being derived, we are interested to further analyze its properties. Though Kurtz and Protter~\cite{KT1991-2} derived the form of the normalized error process of the Euler scheme under the condition that the coefficients are $\mathcal{C}^1$ and bounded, its properties have barely been studied in previous work. In this section, we focus on the mean, variance and martingality of the limit error process under the globally Lipschitz condition. The locally Lipschitz case is more complicated and is studied through examples as well.

\subsection{The Globally Lipschitz Case}
\begin{theorem}{\label{th.4-1}}
When $\mu$ and $\sigma$ are globally Lipschitz, for the normalized error process $U_n=\sqrt{n}(X^n-X)$ from the continuous Euler scheme, there exists $0<C_t<\infty$, where $C_t$ increasing with $t$, such that   
 \begin{align*}
\mathbb{E} [U_t^2] \leq \mathbb{E} [ U^{*2}_t] \leq C_t, 
 \end{align*}
where $U^*_t=\underset{0\leq s\leq t} {\sup} \abs{U_s}$. 
Furthermore when $\mu^\prime=0$, $U$ is a square integrable martingale.
\end{theorem}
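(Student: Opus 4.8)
The plan is to read off the estimate from the linear SDE satisfied by the weak limit $U$ of Theorem \ref{th.3-2},
$$U_t=\int_0^t \mu'(X_s)U_s\,ds+\int_0^t \sigma'(X_s)U_s\,dW_s+\frac{\sqrt 2}{2}\int_0^t \sigma(X_s)\sigma'(X_s)\,dB_s,\qquad U_0=0,$$
by running a Gr\"onwall estimate for $\mathbb E[(U^*_t)^2]$. Let $K$ be a common Lipschitz constant for $\mu$ and $\sigma$; by the convention adopted in Theorem \ref{th.3-2} one has $\abs{\mu'}\le K$ and $\abs{\sigma'}\le K$ everywhere, and $\abs{\sigma(x)}\le \abs{\sigma(0)}+K\abs{x}$, so that $\sigma(X_s)^2\sigma'(X_s)^2\le K^2\big(\abs{\sigma(0)}+K\abs{X_s}\big)^2$. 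Since SDEs with globally Lipschitz coefficients satisfy $\mathbb E[\sup_{0\le s\le t}X_s^2]<\infty$, the quantity $D_t:=\mathbb E\big[\int_0^t \sigma(X_s)^2\sigma'(X_s)^2\,ds\big]$ is finite and nondecreasing in $t$.

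To avoid the circularity of a direct Gr\"onwall argument (which would presuppose finiteness of $\mathbb E[(U^*_t)^2]$), I would localize: with $\tau_N=\inf\{t\ge 0:\abs{U_t}\ge N\}$ one has $\tau_N\uparrow\infty$ a.s.\ because $U$ is continuous. Using $(a+b+c)^2\le 3(a^2+b^2+c^2)$, Cauchy--Schwarz on the drift term, and the $L^2$ Doob/Burkholder--Davis--Gundy inequality on the two stochastic integrals, one gets, for $t\ge 0$,
$$\mathbb E\big[(U^*_{t\wedge\tau_N})^2\big]\ \le\ \big(3K^2t+3CK^2\big)\int_0^t \mathbb E\big[(U^*_{s\wedge\tau_N})^2\big]\,ds\ +\ \tfrac32 C\,D_t,$$
$C$ being the numerical BDG constant. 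The left-hand side is $\le N^2<\infty$, so Gr\"onwall's lemma (with nondecreasing forcing term $\tfrac32 CD_t$) gives a bound $\mathbb E[(U^*_{t\wedge\tau_N})^2]\le \tfrac32 C D_t\exp\!\big(\tfrac32K^2t^2+3CK^2t\big)$ independent of $N$. Letting $N\to\infty$ and applying Fatou's lemma yields $\mathbb E[(U^*_t)^2]\le C_t$ with $C_t$ finite and nondecreasing in $t$ (and positive, after enlarging it by a constant if necessary); $\mathbb E[U_t^2]\le\mathbb E[(U^*_t)^2]$ is immediate.

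For the final assertion, suppose $\mu'\equiv 0$; then $U_t=\int_0^t \sigma'(X_s)U_s\,dW_s+\frac{\sqrt2}{2}\int_0^t \sigma(X_s)\sigma'(X_s)\,dB_s$ is the sum of two stochastic integrals against Brownian motions, hence of two continuous local martingales vanishing at $0$. By the bound just obtained, $\mathbb E\big[\int_0^T \sigma'(X_s)^2U_s^2\,ds\big]\le K^2\int_0^T C_s\,ds<\infty$ and $\mathbb E\big[\int_0^T \sigma(X_s)^2\sigma'(X_s)^2\,ds\big]=D_T<\infty$, so both integrands lie in $L^2(\mathbb P\times ds)$ on $[0,T]$; therefore each integral, and thus their sum $U$, is a genuine square-integrable martingale on $[0,T]$, and since $T$ was arbitrary, on $[0,\infty)$.

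I expect the only real point of care to be the a priori integrability required to close Gr\"onwall, which is precisely why one stops at $\tau_N$ rather than working with $U$ directly; the rest is a routine combination of Cauchy--Schwarz, BDG and Fatou. One other difference from the proof of Theorem \ref{th.3-2} worth flagging is that here $\sigma$ is only globally Lipschitz and need not be bounded, so control of the $dB$-term rests on the linear growth bound $\abs{\sigma(x)}\le\abs{\sigma(0)}+K\abs{x}$ together with the finite second moment of $X$.
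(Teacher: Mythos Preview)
Your argument is correct, but it takes a different route from the paper's. The paper does not work with the limit SDE at all for the second-moment bound: instead it invokes the pre-limit estimate $\sup_n \mathbb E[(U^{n*}_t)^2]\le C_t$ (quoted from Kloeden's proof of the strong error bound for the Euler scheme under global Lipschitz), passes to an a.s.\ convergent subsequence via Skorokhod/a.s.\ representation, and then applies Fatou to push the bound through the weak limit. Your approach bypasses the discrete scheme entirely and runs a direct localization--BDG--Gr\"onwall estimate on the linear SDE \eqref{Sde4} for $U$; Fatou (or monotone convergence) then removes the stopping time $\tau_N$. Both are short; the paper's version is slightly slicker because it outsources the Gr\"onwall work to a textbook lemma, while yours is more self-contained and yields an explicit form for $C_t$ in terms of $K$ and $D_t$. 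For the martingale assertion when $\mu'\equiv 0$, the two proofs are essentially identical: local martingale plus $\mathbb E\langle U,U\rangle_t<\infty$.

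One small point worth noting: your argument presupposes that $U$ actually satisfies the SDE \eqref{Sde4}, which in this paper is established in Theorem~\ref{th.3-2} under the additional hypothesis that $\sigma$ is bounded away from $0$ on compacts. The paper's proof of the moment bound does not need that representation (it works at the level of the approximations $U^n$), so strictly speaking it covers a marginally wider class of globally Lipschitz $\sigma$. In practice this is a non-issue, since the paper's own martingale argument also appeals to \eqref{Sde4} and hence sits in the same setting as yours.
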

\begin{proof}
Since $U^n\Rightarrow U$ uniformly on $[0,T]$, we have $\forall \ t \in [0,T]$, $U^{n*}_t\Rightarrow U^*_t$. 
When $\mu$ and $\sigma$ are both globally Lipschitz, from Kloeden \cite{KP1999} proof of Theorem 10.2.2, there exists a $C_t$, increasing with t, such that
\begin{align}\label{bd1}
\underset{n}{\sup} \ \mathbb E [(U^{n*}_t)^2]< C_t.
\end{align} 
Without loss of generality we can assume there exists a subsequence $(U^{n_k})^2\overset{a.s.}{\to} U^2$ uniformly on $[0,T]$. Since $(U^{n_k})^2\geq 0$, from Fatou's lemma  
 \begin{align*}
\mathbb{E} [U_t^2] \leq \mathbb{E}[U^{*2}_t]\leq \underset{k\rightarrow \infty} {\liminf} \ \mathbb{E}[(U_t^{n_k*})^2]\leq C_t.
\end{align*}
%
When $\mu^\prime=0$, there is no drift term in (\ref{Sde4}). Thus $U$ is a local martingale. We also have a bound for the expectation of the quadratic variation of $U_t$. Since $\mu, \sigma$ are globally Lipschitz, it is known that $\mathbb{E}(X_t^2)<\infty, \forall \ t \in [0,T]$.
Let $K$ be the Lipschitz coefficients for $\mu$ and $\sigma$, then
 \begin{align*}
 \mathbb{E}(\langle U,U\rangle_t)&= \mathbb{E}\Big[\int_0^t \{\sigma^{\prime 2} (X) U_s^2 +\sigma^{\prime 2} (X) \sigma^2(X_s)\} ds \Big ]\\
 &\leq  \int_0^t  K^2 \mathbb{E}(U_s^2) ds + \int_0^t  K^4\mathbb{E}(X_s^2) ds <\infty.
\end{align*}
$U_s$ is a local martingale with finite expected quadratic variation. From Corollary 3 in page 73 in Protter \cite{PP1990}, we conclude it is a martingale when $\mu^\prime=0$. 
\end{proof}

\subsection{The Locally Lipschitz Case and Examples}\label{ss1a}
\subsubsection{ The Inverse Bessel Process}
When $\mu$ and $\sigma$ are only locally Lipschitz, the finiteness of the second moment of the corresponding $U_t$ may not hold. Theorem  \ref{th.4-1} cannot be extended to the locally Lipschitz plus no finite explosion time case. One example is the inverse Bessel process, which is a solution to the SDE 
 \begin{align*}
dX_t=X_t^2dW_t, \ \ X_0>0. 
\end{align*}
The coefficient $\sigma(x)=x^2$ is locally Lipschitz and $X$ has no finite explosion. From Theorem \ref{th.3-2}, the error process $U^n_t=\sqrt{n}(X_t^n-X_t)$ converges in distribution uniformly to $U_t$ on $[0,T]$. $U_t$ is the solution to
 \begin{align*}
dU_t=2 X_t U_tdW_t+{\sqrt{2}}X_t^{3}dB_t,
\end{align*}
where $B$ is a Brownian motion independent of $W$. 
 \begin{align*}
 \mathbb{E}(U_t^2)&= \mathbb{E} \Big ( 2\int_0^t X_s U_sdW_s + {\sqrt{2}} \int_0^t  X_s^{3}dB_s \Big )^2\\
 &=4\mathbb{E} \Big ( \int_0^t X_s U_sdW_s\Big )^2  + 2\mathbb{E}   \Big (\int_0^t  X_s^{6}ds \Big )
 \end{align*}
Since the inverse Bessel process can also be represented as the inverse of the norm of a three dimensional Brownian motion starting from
$(1,0,0)$, its explicit distribution can be obtained (for example see \cite{FH2011}). A calculation shows if $X_0>0$, then $\forall t>0$, $\mathbb{E}X_t^6=\infty$. This gives $\mathbb{E}(U^2_t)=\infty$ and $\mathbb{E}(U^{*2}_t)=\infty$. This indicates that under the locally Lipschitz condition,
the asymptotic distribution for the normalized error process might have a larger tail probability than in the globally Lipschitz case. 
\subsubsection{ The CIR process}
There are however examples with $\mu$ and $\sigma$ only locally Lipschitz, and $U_t$ still has a finite second moment. We look at the Cox-Ingersoll-Ross model (or CIR model) which  is often used to describe the evolution of interest rates. The CIR process follows the SDE
 \begin{align}\label{cir}
dX_t=(a-bX_t)dt+\sigma \sqrt{X_t} dW_t, \ \ X_0>0, a>0. 
\end{align}
The coefficient function $\sigma \sqrt{X_t}$ is only locally Lipschitz. The true solution to (\ref{cir}) remains always positive, but the numerical solution from the Euler scheme may go negative. Thus the Euler scheme is not well defined for solving (\ref{cir}). We use the same trick due to Bossy at al  \cite{Bossy2008}, replacing the Euler scheme by a symmetrized Euler scheme. Let $U^n$ be the sequence of approximate normalized errors from the 
symmetrized Euler scheme solving (\ref{cir}). By Theorem 2.2 in Berkaoui, Bossy and Diop \cite{Bossy2008}, 
there exists a $C_t$, increasing with t, such that
\begin{align}\label{bd1}
\underset{n}{\sup} \ \mathbb E [(U^{n*}_t)^2]< C_t,
\end{align} 
if the following condition holds
\begin{align*}
\frac{\sigma^2}{8}\big( \frac{2a}{\sigma^2}-1\big)^2>\mathcal K(8), \ \text{with} \ \mathcal K(p)=\max \{b(4p-1),(2\sigma (2p-1) )^2\}.
\end{align*} 
Since the symmetrized Euler scheme is local and the true solution never hits $0$ or $\infty$ in finite time, it can be shown that $U^n \Rightarrow U$ as $n\to \infty$ on any finite time interval. The weak limit $U$ has the same form as in Theorem \ref{th.3-2}; Indeed, it solves the SDE below.
\begin{align*}
dU_t=-bU_tdt+\frac{\sigma U_t}{2 \sqrt{X_t}}dW_t+\frac{\sqrt{2}}{2}\sigma^2 dB_t.
\end{align*} 
With (\ref{bd1}), applying Fatou's Lemma, we have 
 \begin{align*}
\mathbb{E} [U_t^2] \leq \mathbb{E} [ U^{*2}_t] \leq C_t. 
 \end{align*}
 The inverse Bessel and CIR examples show that the finiteness of the second moment of the normalized error process for the Euler scheme (or modified Euler scheme in order for the scheme to be well defined) under the locally Lipschitz situation is more complicated than the globally Lipschitz situation. 
\section{Approximation of Expectations of Functionals}
In applications, the convergence of expectations of functionals (also called weak convergence in existing literature) of the Euler scheme is important.
 To avoid confusion, in this section weak convergence means the convergence of expectations of functionals unless further specified. 
We are interested in the rate of convergence for
$\mathbb{E}[g(X_T^n)]-\mathbb{E}[g(X_T)]$ to $0$, as $n$ goes to infinity. When $\mu$ and $\sigma$ are only 
assumed to be locally Lipchitz, inferred from Hutzenthaler, Jentzen and Kloeden \cite{MH2011}, even for $g$ with linear growth, weak convergence in 
the sense of expectations of functionals may not hold. As a compromise, in this section we assume $g$ is Lipschitz and 
bounded, and give upper bound for the weak convergence rate with the no finite explosion condition and some other mild 
conditions on the SDE (\ref{Sde1}). Before we deal with the locally Lipschitz case, we need the following Proposition.  

In Proposition \ref{weak1}, inequality (\ref{weak_1}) can be inferred from Kloeden \cite{KP1999} page 343 proof of 
Theorem 10.2.2 in chapter 10, or Theorem 4.4 in H. Desmond and X.Mao \cite{HD2002}. 

\medskip

\begin{proposition}\label{weak1}
Consider SDE (\ref{Sde1}), if $\mu$ and $\sigma$ are globally Lipschitz with Lipschitz coefficient as K, then 
 for all $T>0$ there exists $c>0$ not depending on $K$ increasing with $T$, such that for all $n \in \mathbb{N}^{+}$,  
\begin{align} \label{weak_1}
 \mathbb{E}[ \underset{0\leq s \leq T} {\sup}(X^n_s-X_s)^2]\leq \frac{ \exp(cK)}{n},
\end{align}
and for all $\gamma \in [0,\frac{1}{2})$
\begin{align}\label{weak_2}
 \mathbb P(\underset{0\leq s \leq T} {sup} \abs{X^n_s-X_s}>n^{-\gamma})\leq \exp(cK)n^{-1+2\gamma}.
\end{align}
\end{proposition}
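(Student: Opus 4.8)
The plan is to deduce (\ref{weak_1}) from the classical strong $L^2$ error estimate for the Euler scheme, keeping track of how the constant depends on the Lipschitz constant $K$, and then to obtain (\ref{weak_2}) from (\ref{weak_1}) by Chebyshev's inequality. Only the first step involves real work, and it is precisely the analysis carried out in Kloeden \cite{KP1999} (proof of Theorem 10.2.2) and in Theorem 4.4 of \cite{HD2002}, so the write-up can be kept short by quoting those references; below is the route I would follow to make the $K$-dependence explicit.

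Write $e^n_t=X^n_t-X_t$ and record its integral form
\[
e^n_t=\int_0^t\bigl(\mu(X^n_{n(s)})-\mu(X_s)\bigr)\,ds+\int_0^t\bigl(\sigma(X^n_{n(s)})-\sigma(X_s)\bigr)\,dW_s .
\]
For each coefficient I would split $\mu(X^n_{n(s)})-\mu(X_s)=\bigl(\mu(X^n_{n(s)})-\mu(X^n_s)\bigr)+\bigl(\mu(X^n_s)-\mu(X_s)\bigr)$, and similarly for $\sigma$, and then use the Lipschitz bounds $|\mu(X^n_s)-\mu(X_s)|\le K|e^n_s|$ and $|\mu(X^n_{n(s)})-\mu(X^n_s)|\le K|X^n_s-X^n_{n(s)}|$. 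Applying Doob's $L^2$ maximal inequality to the stochastic integral and the Cauchy--Schwarz inequality to the Lebesgue integral, and writing $\phi_n(t):=\mathbb{E}\,\sup_{0\le s\le t}|e^n_s|^2$, one is led to an inequality of the shape
\[
\phi_n(t)\le a_n(K,T)+b(K,T)\int_0^t\phi_n(s)\,ds ,
\]
where $b(K,T)$ is an explicit coefficient (polynomial in $K$, increasing in $T$) and $a_n(K,T)$ collects the one-step error terms $\mathbb{E}\!\int_0^T|X^n_s-X^n_{n(s)}|^2\,ds$. Since $X^n_s-X^n_{n(s)}=\mu(X^n_{n(s)})\Delta s^{(n)}+\sigma(X^n_{n(s)})\Delta W^{(n)}_s$, the linear growth of $\mu$ and $\sigma$ (a consequence of the Lipschitz hypothesis) together with the standard uniform-in-$n$ moment bound $\sup_n\sup_{0\le s\le T}\mathbb{E}|X^n_s|^2<\infty$ — itself a Gronwall estimate whose constant depends on $K$ and $x_0$ — gives $a_n(K,T)\le C(K,T)/n$. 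Gronwall's lemma then yields $\phi_n(T)\le C(K,T)\,e^{b(K,T)T}/n$, and, as recorded in the cited references, this constant can be written in the form $\exp(cK)$ with $c=c(T)$ not depending on $K$ and non-decreasing in $T$; this is (\ref{weak_1}).

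For (\ref{weak_2}), fix $\gamma\in[0,\tfrac12)$ and apply Chebyshev's inequality followed by (\ref{weak_1}):
\[
\mathbb{P}\!\Bigl(\sup_{0\le s\le T}|X^n_s-X_s|>n^{-\gamma}\Bigr)
=\mathbb{P}\!\Bigl(\sup_{0\le s\le T}|X^n_s-X_s|^2>n^{-2\gamma}\Bigr)
\le n^{2\gamma}\,\mathbb{E}\!\Bigl[\sup_{0\le s\le T}|X^n_s-X_s|^2\Bigr]
\le \exp(cK)\,n^{-1+2\gamma},
\]
with the same $c$, which is the asserted bound. The structure is completely standard; the only point needing care — and the reason the statement is phrased the way it is — is the bookkeeping of constants: one must check that the Gronwall factors coming from the a priori moment bound on $X^n$, from the one-step increment estimate, and from the error recursion combine into a single factor of the form $\exp(cK)$ with $c$ depending only on $T$ (and the fixed datum $x_0$) and increasing in $T$. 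The precise polynomial-in-$K$ content of the exponent is irrelevant for the later use of the proposition, where $K$ is held fixed.
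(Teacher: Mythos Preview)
Your proposal is correct and follows essentially the same route as the paper: the paper simply quotes the explicit bound $\mathbb{E}[\sup_{0\le s\le T}(X^n_s-X_s)^2]\le \frac{C}{n}(K^2+1)\exp\{4K(T+4)\}+\frac{A}{n}$ from Theorem~4.4 of \cite{HD2002}, absorbs it into the form $\exp(cK)/n$, and then applies Chebyshev's inequality for (\ref{weak_2}). Your write-up differs only in that you sketch the Gronwall argument underlying the cited result rather than quoting it directly.
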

\begin{proof}
From Theorem 4.4 in H. Desmond and X.Mao \cite{HD2002}, with the globally Lipschitz condition, 
for any $\delta>0$ there exists universal constant $C$ and $A$ independent of $n$ such that
\begin{align*}
 \mathbb{E}[ \underset{0\leq s \leq T} {\sup}(X^n_s-X_s)^2]\leq 
 \frac{C}{n}(K^2+1)\exp\{4K(T+4)\}+\frac{A}{n}.
\end{align*}
Thus (\ref{weak_1}) holds. Applying the Chebyshev's inequality gives (\ref{weak_2}).
\end{proof}
\begin{theorem}\label{th3}
Consider the SDE (\ref{Sde1}). If $\mu, \sigma$ are locally Lipschitz and we assume the Lipschitz constant
has at most polynomial growth with exponent $a \in \mathbb{R}^+$, that is, for all $x,y \in \mathbb{R}$
$$
|\mu(x)-\mu(y)|+|\sigma(x)-\sigma(y)|\le (\max\{|x|,|y|\}+K)^a \, |x-y|,
$$
where $K$ is a constant.
We assume there exists $ \kappa,\nu>0$, such that for all $x>0$ 
\begin{align*} 
\mathbb P(X_T^*>x)\leq \kappa x^{-\nu}.
\end{align*}
Then, there exists a finite constant $C=C(\kappa,\nu,K,a,|x_0|)$ such that
for any Lipschitz and bounded function $g$ and for all $n>1$
\begin{align*} 
\abs{\mathbb{E}g(X_T^n)-\mathbb{E}g(X_T)}<C(\|g\|_\infty+G+1)\big(\log n\big)^{-\frac{\nu}{a}},
\end{align*}
where $G$ is the Lipschitz constant of $g$.
\end{theorem}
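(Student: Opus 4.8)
The plan is to combine a localization argument with the globally Lipschitz estimate from Proposition~\ref{weak1} and the tail bound on $X_T^*$. First I would introduce a truncation level $R>1$ to be optimized later. On the event $\{X_T^*\le R\}$ the solution stays in $[-R,R]$, where the polynomial growth assumption gives a Lipschitz constant of order $(R+K)^a$; so on this event the dynamics agree with those of the truncated SDE driven by $\mu^{(R)},\sigma^{(R)}$ (the modifications from the proof of Theorem~\ref{th.CVP}), which are globally Lipschitz with constant $K_R:=C(R+K)^a$ for a universal $C$. Writing $Y=X(x_0,\mu^{(R)},\sigma^{(R)},W)$ and $Y^n$ for its Euler scheme, locality of the Euler scheme and Proposition~\ref{lemma3-1}-type reasoning show that $X$ and $Y$ (resp.\ $X^n$ and $Y^n$) coincide up to the exit time of $[-R,R]$.

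Next I would split
\begin{align*}
\abs{\EE g(X_T^n)-\EE g(X_T)}
&\le \abs{\EE g(X_T^n)-\EE g(Y_T^n)} + \abs{\EE g(Y_T^n)-\EE g(Y_T)} + \abs{\EE g(Y_T)-\EE g(X_T)}.
\end{align*}
The first and third terms are each bounded by $2\|g\|_\infty\,\PP(X_T^*>R)$ together with the analogous probability for the Euler scheme; the latter is controlled by the tail hypothesis plus Proposition~\ref{weak1}, giving a bound of order $\kappa R^{-\nu}$ (after absorbing the Euler-scheme exit probability using $\PP(\text{exit of }X^n)\le \PP(X_T^*>R-1)+\PP(\sup|X^n_s-X_s|\ge 1)$ and the $L^2$ estimate \eqref{weak_1} for the truncated, globally Lipschitz system). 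For the middle term I would use that $g$ is Lipschitz with constant $G$ together with \eqref{weak_1} applied to $Y,Y^n$:
\begin{align*}
\abs{\EE g(Y_T^n)-\EE g(Y_T)} \le G\,\EE\abs{Y_T^n-Y_T} \le G\,\Big(\EE\,\underset{0\le s\le T}{\sup}(Y_T^n-Y_T)^2\Big)^{1/2} \le G\,\frac{\exp(cK_R/2)}{\sqrt n}.
\end{align*}
So the total bound has the shape $C\|g\|_\infty \kappa R^{-\nu} + C G \exp\big(c'(R+K)^a\big) n^{-1/2}$, up to constants depending only on $T$.

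Finally I would optimize over $R$. The exponential factor $\exp(c'(R+K)^a)$ forces $R$ to grow only logarithmically in $n$: choosing $R$ so that $c'(R+K)^a \approx \tfrac12 \log n$, i.e.\ $R \asymp (\log n)^{1/a}$, makes the second term $O(G\, n^{-1/4})$ or better, while the first term becomes $O(\|g\|_\infty \kappa (\log n)^{-\nu/a})$, which is the dominant one. Collecting constants into $C=C(\kappa,\nu,K,a,|x_0|)$ and bounding the polynomial $n^{-1/4}$ term by the logarithmic term for all $n>1$ yields the claimed estimate $C(\|g\|_\infty + G + 1)(\log n)^{-\nu/a}$. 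The main obstacle is bookkeeping the dependence of the constant $c$ in \eqref{weak_1} on the Lipschitz constant $K_R$ and making sure the exponential blow-up is genuinely tamed by the logarithmic choice of $R$; a secondary subtlety is handling the Euler scheme's exit probability cleanly, since a priori $X^n$ could leave $[-R,R]$ even when $X$ does not, and this must be absorbed into the $R^{-\nu}$ term via \eqref{weak_2} applied to the truncated system.
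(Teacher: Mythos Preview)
Your proposal is correct and follows essentially the same approach as the paper's proof: localize via truncated coefficients, apply Proposition~\ref{weak1} to the globally Lipschitz truncated system, control the exit probabilities using the tail hypothesis on $X_T^*$ together with \eqref{weak_2}, and then balance the $\exp(c(R+K)^a)n^{-1/2}$ term against the $R^{-\nu}$ term by taking $R\asymp(\log n)^{1/a}$. The paper's only organizational differences are that it stops at $\theta^m=\T^{m+2}(X)\wedge\T^{m+2}(X^n)$ (giving a two-term split rather than your three-term triangle inequality through $Y,Y^n$) and parametrizes the optimization by writing $n$ as a function of $m$ instead of $R$ as a function of $n$; these are equivalent bookkeeping choices.
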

\begin{proof}
For a fixed $m>\abs{x_0}$, define $\mu^{(m)}, \sigma^{(m)}$ and $Y^{(m)}$ as in Proposition \ref{th.CVP}. 
Let $\T^m(Z)=\inf\{ t\ge 0: |Z_t|>m\}$.
Since the Euler scheme is local,
\begin{align*}
&\T^m(X(x_0,\mu,\sigma,W))=\T^m(X(x_0,\mu^{(m)},\sigma^{(m)},W)),\\
& \T^m(X^n(x_0,\mu,\sigma,W))=\T^m(X^n(x_0, \mu^{(m)},\sigma^{(m)},W)). 
\end{align*}
Let $\theta^m= \T^{m+2}(X(x_0,\mu,\sigma,B)) \wedge \T^{m+2}(X^n(x_0,\mu,\sigma,B))$. Then 
\begin{align*} 
\abs{\mathbb{E}g(X_T^n)-\mathbb{E}g(X_T)} \le 
\abs{\mathbb{E}g(X_{T\wedge \theta^m }^n)-\mathbb{E}g(X_{T\wedge \theta^m })}+2\|g\|_\infty\mathbb P(\theta^m<T)
\end{align*}
Let $G$ be a Lipschitz constant for $g$. Then by (\ref{weak_1}) in Proposition \ref{weak1}, and since
the Lipschitz constant of $\mu,\sigma$ on $[-(m+2),m+2]$ is bounded by $(m+K+2)^a$
\begin{align*} 
\abs{\mathbb{E}g(X_{T\wedge \theta^m }^n)-\mathbb{E}g(X_{T\wedge \theta^m })} \leq 
G( \mathbb{E} \abs{X_{T\wedge \theta^m }^n-X_{T\wedge \theta^m }}^2 )^{\frac{1}{2}} \leq 
G \exp\left(\frac{c}{2} (m+K+2)^a\right)n^{-\frac{1}{2}},
\end{align*}
where $c$ is the constant given in Proposition \ref{weak1}.
By the distribution assumption on $X^*$ and (\ref{weak_2}) in Proposition \ref{weak1} with $\gamma=0$, 
\begin{align*} 
&\mathbb P(\theta^m<T)\le\mathbb P( \T^{m+2}(X(x_0,\mu,\sigma,W)) <T)+\mathbb P( \T^{m+2}(X^n(x_0,\mu,\sigma,W)) <T)\\
&\le 2\mathbb \mathbb P( \T^{m+1}(X) \le T)+
\PP(\T^{m+1}(X) >T,\, \T^{m+2}(X^n) <T)\\
&=2\mathbb \mathbb P( \T^{m+1}(X) \le T)+
\PP(\T^{m+1}(X^{(m+2)}) >T,\, \T^{m+2}(X^{n,(m+2)}) <T)\\
&\le 2\mathbb \mathbb P( \T^{m+1}(X) <T)+ \exp(c(m+K+2)^a)n^{-1}\\
&\le 2\kappa (m+1)^{-\nu}+ \exp(c(m+K+2)^a)n^{-1}.
\end{align*} 
For $m\ge 0$, we have $(m+1)^{-\nu}\le (K+2)^\nu (m+K+2)^{-\nu}$. Thus, we get
$$
\begin{array}{ll}
\abs{\mathbb{E}g(X_T^n)-\mathbb{E}g(X_T)} \le &2 \|g\|_\infty \left[2\kappa\,(K+2)^\nu (m+K+2)^{-\nu}+
\exp(c(m+K+2)^a)n^{-1}\right]
\\&+G \exp\left(\frac{c}{2} (m+K+2)^a\right)n^{-\frac{1}{2}}.
\end{array}
$$
Take $n= (m+K+2)^{2\nu}\exp(c (m+K+2)^a)$ to get
$$
\abs{\mathbb{E}g(X_T^n)-\mathbb{E}g(X_T)} \le (2\|g\|_\infty[(K+2)^\nu 2\kappa+1]+G) (m+K+2)^{-\nu}.
$$
Notice that 
$$ 
\log n\le c(m+K+2)^a+2\nu \log(m+K+2)\le (c+2\nu/a) (m+K+2)^a,
$$
which implies that
$$
\abs{\mathbb{E}g(X_T^n)-\mathbb{E}g(X_T)} \le (2\|g\|_\infty[(K+2)^\nu 2\kappa+1]+G) \left(\frac{\log n}{c+2\nu/a}\right)^{-\frac{\nu}{a}}.
$$
The result follows from this estimation. 
Finally, notice we have assumed $m\ge |x_0|$, which imposes that $n\ge n_0$ has to be large enough, for example
$$
\log n_0\ge c (|x_0|+K+2)^a+2\nu \log(|x_0|+K+2).
$$

\end{proof}

As an example we consider the constant elasticity of variance process which follows the following SDE
\begin{align*} 
dS_t=bS_t^\beta dW_t, \ S_0>0
\end{align*}
When $ \beta>1$, the solution to the above SDE is strict local martingale and is used for detecting asset bubbles. 
By a result of A. N. Borodin and P. Salminen \cite{BS2012}, chapter 4.6, $\forall x>S_0, \ T>0$
\begin{align*} 
P(S_T^*>x)<\frac{S_0}{x}.
\end{align*}
Thus, there exists a constant $C>0$, such that for all $g : \mathbb R \rightarrow \mathbb{R} $, 
bounded and Lipschitz and for all $n>1$
\begin{align*} 
 \abs{\mathbb{E}g(S_T^n)-\mathbb{E}g(S_T)}<C(\|g\|_\infty+G+1)\big(\log n\big)^{-\frac{1}{(\beta-1)}}
\end{align*}

\bigskip

\begin{remark}\label{r5} 
The above example of the CEV process for $\beta >1$ illustrates the weakness of the result of Theorem~\ref{th3}. The rate of convergence is so slow as to be essentially useless in practice. It is our hope that future research will illustrate methods that will permit a more practically useful analysis of the rate of convergence. This seems far away at this point. 
\end{remark}



\begin{thebibliography}{40}


\bibitem{JA2009} J. Arnulf and P. E. Kloeden,  The numerical approximation of stochastic partial differential equations. \textit{Milan Journal of Mathematics} 77.1: 205-244, 2009.
\bibitem{BV1995} V. Bally and D. Talay, The Euler scheme for stochastic differential equations: error analysis with Malliavin calculus. \textit{Mathematics and computers in simulation}  38.1-3: 35-41, 1995.
\bibitem{BV1996-1}V. Bally and D. Talay, The law of the Euler scheme for stochastic differential equations. \textit{Probability theory and related fields} 104.1: 43-60, 1996.
\bibitem{BV1996-2} V. Bally and D. Talay, The law of the Euler scheme for stochastic differential equations: II. Convergence rate of the density.  \textit{Monte Carlo Methods and Applications} 2.2: 93-128, 1996.
\bibitem{Bass} R. F. Bass and E. Pardoux, Uniqueness for diffusions with piecewise constant coefficients. \textit{Probability Theory and Related Fields} 76.4: 557-572, 1987.
\bibitem{Bossy2008} A. Berkaoui, M. Bossy and A. Diop, Euler scheme for SDEs with non-Lipschitz diffusion coefficient: strong convergence. \textit{ESAIM: Probability and Statistics}, 12, 1-11, 2008.
\bibitem{BS2012} A.N. Borodin and P. Salminen, \emph{Handbook of Brownian Motion-Facts and Formulae, Second Edition}, Birkh\"auser, 2015.
\bibitem{Bossy} M. Bossy, A. Diop. An efficient discretisation scheme for one dimensional SDEs with a diffusion coefficient function of the form $\abs{x}^ a$, $a$ in $[1/2, 1)$ (Doctoral dissertation, INRIA).
\bibitem{EH1985} H. Engelbert and W. Schimidt, On one-dimensional stochastic differential equations with generalized drift.\textit{ Stochastic Differential Systems Filtering and Control}: 143-155, 1985.
\bibitem{FH2011}  H. Föllmer and P. Protter, Local martingales and filtration shrinkage. \textit{ESAIM: Probability and Statistics} 15: S25-S38, 2011.
\bibitem{GI} I, Gyöngy. A note on Euler's approximations. \textit{Potential Analysis}, May 1;8(3):205-16, 1998 .


\bibitem{HD2002} D. J. Higham, X. Mao and A.M.Stuart,  Strong convergence of Euler-type methods for nonlinear stochastic differential equations. \textit{SIAM Journal on Numerical Analysis} 40.3: 1041-1063, 2002.
\bibitem{MH2011} M. Hutzenthaler,  A. Jentzen and P. E. Kloeden, Strong and weak divergence in finite time of Euler's method for stochastic differential equations with non-globally Lipschitz continuous coefficients. \textit{Proceedings of the Royal Society of London A: Mathematical, Physical and Engineering Sciences. Vol. 467. No. 2130.} The Royal Society, 2011.
\bibitem{MH2015} M. Hutzenthaler and A. Jentzen, Numerical approximations of stochastic differential equations with non-globally Lipschitz continuous coefficients. Vol. 236. No. 1112. American Mathematical Society, 2015.
\bibitem{JJ1998} J. Jacod and P. Protter, Asymptotic error distributions for the Euler method for stochastic differential equations. \textit{Annals of Probability}: 267-307, 1998.
\bibitem{JJ2011} J. Jacod and P. Protter, \textit{Discretization of processes}. Vol. 67. Springer Science and Business Media, 2011.
\bibitem{KI2012} I. Karatzas, S. Shreve, \textit{Brownian motion and stochastic calculus, Second Edition}, Springer, NewYork, 2012.
\bibitem{KP1999} P. E. Kloeden and E. Platen, \textit{Numerical solution of stochastic differential equations}, Springer Verlag, New York,1999.
\bibitem{KH1994} A. Kohatsu-Higa, and P. Protter, The Euler scheme for SDE's driven by semimartingales.  \textit{Pitman research notes in mathematics series}: 141-151, 1994.
\bibitem{KT1991-1} T. G. Kurtz and P. Protter, Weak limit theorems for stochastic integrals and stochastic differential equations. \textit{The Annals of Probability}: 1035-1070, 1991.
\bibitem{KT1991-2} T. G. Kurtz and P. Protter, Wong-Zakai corrections, random evolutions and numerical schemes for SDE's.\textit{ Stochastic Analysis}: 331-346, 1991.


\bibitem{MG2002} G. Marion X. Mao and E. Renshaw, Convergence of the Euler scheme for a class of stochastic differential equation. \textit{International Mathematical Journal} 1.1: 9-22, 2002.
\bibitem{AM} A. Mijatovic, M. Urusov, On the martingale property of certain local martingales. \textit{Probability Theory and Related Fields} 152,1-30, 2012.
\bibitem{AN} A. Neuenkirch and H. Zähle, Asymptotic error distribution of the Euler method for SDEs with non-Lipschitz coefficients. \textit{Monte Carlo Methods and Applications}, 15(4), 333-351, 2009.
\bibitem{PP1997}P. Protter and D. Talay, The Euler scheme for Lévy driven stochastic differential equations, \textit{The Annals of Probability }25.1: 393-423, 1997.
\bibitem{PP1990}P. Protter, \textit{Stochastic integration and differential equations, Version 2.1, Second Edition}, Springer, Heidelberg, 2005.




\bibitem{RD2013} D. Revuz and M. Yor, \textit{Continuous martingales and Brownian motion}.,Vol. 293. Springer, 2013.
\bibitem{TD1983} D. Talay, R\'esolution trajectorielle et analyse num\'erique des \'equations diff\'erentielles stochastiques, \emph{Stochastics}: \textit{An International Journal of Probability and Stochastic Processes} 9.4 : 275-306, 1983.
\bibitem{TD1984} D. Talay, Efficient numerical schemes for the approximation of expectations of functionals of the solution of a SDE, and applications. \emph{Filtering and control of random processes}, Springer Berlin Heidelberg, 294-313, 1984.
\bibitem{TD1990}  D. Talay and L. Tubaro, Expansion of the global error for numerical schemes solving stochastic differential equations.\textit{ Stochastic analysis and applications} 8.4 : 483-509, 1990.
\bibitem{TD1990-2} D. Talay, Simulation and numerical analysis of stochastic differential systems: a review. (Doctoral dissertation, INRIA), 1990.

\bibitem{VDV1996}  A. W. van der Vart and J. A. Wellner, \textit{Weak convergence and empirical processes: with applications to statistics}. Springer, 2013.
\bibitem{YL2002} L. Yan, The Euler scheme with irregular coefficients. \textit{The Annals of Probability} 30.3: 1172-1194, 2002.
\bibitem{YL2005} L. Yan,  Asymptotic error for the Milstein scheme for SDEs driven by continuous semimartingales.  \textit{The Annals of Applied Probability} 15, no. 4: 2706-2738, 2005.

\end{thebibliography}
\end{document}